\begin{document}

\newcommand{\tabsize}{\fontsize{8.8}{10.5pt}\selectfont}

\newtheorem*{theorem*}{Theorem}
\newtheorem*{lemma*}{Lemma}
\newtheorem*{claim*}{Claim}
\newtheorem*{exercise*}{Exercise}
\newtheorem*{note*}{Note}
\newtheorem*{example*}{Example}
\newtheorem*{problem*}{Problem}
\newtheorem*{solution*}{Solution}
\newtheorem*{remark*}{Remark}
\newtheorem{theorem}{Theorem}[section]
\newtheorem{proposition}[theorem]{Proposition}
\newtheorem{remark}[theorem]{Remark}
\newtheorem{conjecture}[theorem]{Conjecture}
\newcommand{\x}{\text{\boldmath{$X$}}}
\newcommand{\ssum}[2] {\overset{#2}{\underset{#1}{\sum}}}  
\newcommand{\nature}{\ensuremath{\mathbb{N}}}   
\newcommand{\integer}{\ensuremath{\mathbb{Z}}}  
\newcommand{\rational}{\ensuremath{\mathbb{Q}}} 
\numberwithin{equation}{section}

\title[Representing Sets with Triangular Numbers]{Representing Sets with Sums of Triangular Numbers}
\author{Ben Kane}
\address{Department of Mathematics, Radboud Universiteit, Toernooiveld 1, 6525 AJ, Nijmegen, Netherlands}
\email{bkane@science.ru.nl}
\date{\today}
\begin{abstract}
We investigate here sums of triangular numbers $f(x):=\ssum{i}{} b_i T_{x_i}$ where $T_n$ is the $n$-th triangular number.  We show that for a set of positive integers $S$ there is a finite subset $S_0$ such that $f$ represents $S$ if and only if $f$ represents $S_0$.  However, computationally determining $S_0$ is ineffective for many choices of $S$.  We give an explicit and efficient algorithm to determine the set $S_0$ under certain Generalized Riemann Hypotheses, and implement the algorithm to determine $S_0$ when $S$ is the set of all odd integers. 
\end{abstract}
\keywords{Triangular Numbers; Quadratic Forms; Sums of Odd Squares; Half Integral Weight Modular Forms; Theta Series}
\maketitle

\section{Introduction}\label{introsection}

In 1638 Fermat wrote that every number is a sum of at most three triangular numbers, four square numbers, and in general $n$ polygonal numbers of order $n$.  Here the triangular numbers are $T_x:=\frac{x(x+1)}{2}$, where we include $x=0$ for simplicity.  The claim for four squares was shown by Lagrange in 1772, while Gauss famously wrote ``Eureka, $\triangle+\triangle+\triangle=n$'' in his mathematical diary on July 10, 1796.  
\begin{theorem*}[Gauss, 1796]
Every positive integer is the sum of three triangular numbers.
\end{theorem*}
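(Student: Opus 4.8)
The plan is to reduce the statement to the classical three-square theorem of Legendre and Gauss. Given a positive integer $n$, I want nonnegative integers $x,y,z$ with $n = T_x+T_y+T_z = \frac{x(x+1)}{2}+\frac{y(y+1)}{2}+\frac{z(z+1)}{2}$. Multiplying through by $8$ and completing the square in each variable, this is equivalent to
\[
8n+3 = (2x+1)^2+(2y+1)^2+(2z+1)^2,
\]
so it suffices to show that every integer of the form $8n+3$ is a sum of three odd squares.

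Next I would observe that the oddness is automatic. Since an integer square is congruent to $0$, $1$, or $4$ modulo $8$, the only way three of them can sum to $3$ modulo $8$ is $1+1+1$; hence in any representation $8n+3 = a^2+b^2+c^2$ the integers $a,b,c$ are all odd, and they give the desired $x,y,z$ after subtracting $1$ and halving. Thus the problem collapses to showing that $8n+3$ is a sum of three arbitrary squares.

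The remaining input is the three-square theorem: a positive integer $m$ is a sum of three squares if and only if $m$ is not of the form $4^a(8b+7)$. Since $8n+3 \equiv 3 \pmod 8$ is visibly not of this excluded shape, the theorem applies and completes the argument. The hard part is precisely this theorem — the ``existence'' direction — which I would prove via the arithmetic of the ternary form $x^2+y^2+z^2$: using reduction theory one verifies that its genus consists of a single class for the relevant discriminants, and then one invokes Dirichlet's theorem on primes in arithmetic progressions to furnish a suitable auxiliary prime, enabling a descent (in the style of the Davenport–Cassels lemma, or Gauss's original genus-theoretic argument) that represents $m$. Everything else is the elementary congruence bookkeeping recorded above.
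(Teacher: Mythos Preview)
Your argument is correct and is the standard modern route: the identity $8n+3=(2x+1)^2+(2y+1)^2+(2z+1)^2$ together with the mod~$8$ observation reduces Gauss's triangular-number theorem precisely to the existence direction of the Legendre--Gauss three-square theorem, and $8n+3$ is visibly not of the form $4^a(8b+7)$. The paper, however, does not supply a proof of this statement at all; it is quoted in the introduction as a classical result of Gauss (1796) and then used as a black box throughout (for instance, to conclude that $[k,k,k]$ represents exactly the multiples of $k$). So there is no in-paper proof to compare against; your reduction is sound, with the acknowledged heavy lifting delegated to the three-square theorem.
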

The first proof of the full assertion of Fermat was given by Cauchy in 1813.  

In 1917, Ramanujan extended the question about four squares to consider which choices of $b=(b_1,b_2,b_3,b_4)$ satisfy $b_1x_1^2+b_2x_2^2+b_3x_3^2+b_4x_4^2$ representing every positive integer.  We shall refer to such forms as \begin{it}universal diagonal forms\end{it}.  He gives a list of 55 possible choices of $b$ which he then claims are the complete list of universal quarternary diagonal forms (54 forms actually turned out to be universal).  

In 1862 Liouville similarly proved the following generalization of Gauss's theorem.
\begin{theorem*}
Let $a,b,c$ be positive integers with $a\leq b\leq c$.  Then every $n\in\nature$ can be written as $aT_{x}+b T_{y}+cT_{z}$ if and only if $(a,b,c)$ is one of the following: 
$$
(1,1,1),\, (1,1,2),\, (1,1,4),\, (1,1,5),\, (1,2,2),\, (1,2,3),\, (1,2,4).
$$
\end{theorem*}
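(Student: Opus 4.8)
The plan is to prove the two implications separately. The ``only if'' direction --- that a triple outside the list fails to be universal --- is an elementary descent using the values $n=1,2,4,5,8$; the ``if'' direction --- that each of the seven listed triples is universal --- is obtained by translating the problem about triangular numbers into one about a ternary quadratic form via the identity $8T_k+1=(2k+1)^2$, and then feeding in the classical three--squares theorem.

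For necessity, assume $aT_x+bT_y+cT_z$ represents every $n\in\nature$ with $a\leq b\leq c$. Representing $1$ forces exactly one summand to equal $1$ and the others $0$; since every coefficient is at least $a$, this gives $a=1$. Representing $2$ forces $b\in\{1,2\}$: the only triangular numbers $\leq 2$ are $0$ and $1$, so $2$ is either $T_x+T_y$ (whence $b=1$) or a single summand of value $b$ (whence $b=2$, using $b\leq c$). If $b=1$, then $5$ is not a sum of two triangular numbers, so the third summand is nonzero and $\leq 5$, forcing $c\leq 5$; moreover $8$ rules out $c=3$, since neither $8$ nor $5$ is a sum of two triangular numbers; hence $c\in\{1,2,4,5\}$. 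If $b=2$, then $4$ is not of the form $T_x+2T_y$, so the third summand is again nonzero and $\leq 4$, forcing $c\in\{2,3,4\}$. Exactly the seven tabulated triples survive.

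For sufficiency, multiplying $aT_x+bT_y+cT_z=n$ by $8$ and completing the square shows that the form represents $n$ if and only if $au^2+bv^2+cw^2$ represents $N:=8n+a+b+c$ with $u,v,w$ all odd, so it suffices to represent each residue class $a+b+c \pmod 8$ with this parity constraint. Four of the triples come directly from the three--squares theorem once the parity is tracked. For $(1,1,1)$ this is Gauss's theorem verbatim: $8n+3$ is never of the shape $4^j(8m+7)$, hence is a sum of three squares, and a sum of three squares that is $\equiv 3\pmod 8$ consists of three odd squares. For $(1,1,4)$ one observes that $8n+6$ is a sum of three squares whose residues mod $8$ are forced to be $1,1,4$, so $8n+6=u^2+v^2+(2z)^2$ with $u,v,z$ odd; the residue pattern $0,1,4$ handles $(1,2,2)$ the same way after reassembling the two even squares. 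For $(1,1,2)$, an all--odd representation $u^2+v^2+2w^2=8n+4$ is equivalent, through $u=s+t$ and $v=s-t$, to a representation $s^2+t^2+w^2=4n+2$ with $w$ odd and $s,t$ of opposite parity; since every three--square representation of $4n+2$ (which is $\equiv 2\pmod 4$, hence never of the form $4^j(8m+7)$) has exactly two odd terms, such an arrangement always exists.

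The main obstacle is the remaining three triples, $(1,1,5)$, $(1,2,3)$, $(1,2,4)$, where the odd--variable condition can no longer be read off the three--squares theorem for free. For $(1,1,5)$ and $(1,2,4)$ the relevant class is $N\equiv 7\pmod 8$, which is not a sum of three squares at all: one is reduced to producing, in a prescribed arithmetic progression, a value $N-5w^2$ that is a sum of two squares, respectively a value $(N-u^2)/2$ of the form $v^2+2w^2$. This is precisely the sort of statement Liouville established via his theta--series identities, and in modern terms it amounts to the regularity of the ternary forms $x^2+y^2+5z^2$ and $x^2+2y^2+4z^2$. For $(1,2,3)$ the class $N\equiv 6\pmod 8$ is a sum of three squares, but passing to an all--odd representation of $u^2+2v^2+3w^2$ requires extra input; here I would use the identity $u^2+3w^2=4(P^2-PQ+Q^2)$, valid when $u\equiv w\pmod 2$, together with the parity behaviour of the automorphisms of the norm form $P^2-PQ+Q^2$, to reduce to representing every integer $\equiv 3\pmod 4$ by the regular ternary form $v^2+2x^2-2xy+2y^2$. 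A uniform alternative, at the cost of heavier machinery, is to express each relevant generating series as a weight--$3/2$ modular form, bound its cuspidal contribution, and verify finitely many small $N$ by hand; but for these seven forms the classical argument above is both adequate and more transparent.
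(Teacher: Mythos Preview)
The paper does not prove this statement: it is quoted in the introduction as Liouville's 1862 theorem and invoked thereafter as a black box, so there is no in-paper argument to compare yours against.

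Assessing your argument on its own: the necessity direction via the truants $1,2,4,5,8$ is correct and complete. For sufficiency, the cases $(1,1,1)$, $(1,1,2)$, $(1,1,4)$, $(1,2,2)$ are fully handled by the three-squares theorem plus parity bookkeeping; your terse $(1,2,2)$ step unpacks correctly as $(4a)^2+(4b+2)^2=2v^2+2w^2$ with $v=2a+2b+1$ and $w=2a-2b-1$ both odd. For the remaining three triples you reduce to the regularity of $x^2+y^2+5z^2$, of $x^2+2y^2+4z^2$, and (after the $u^2+3w^2=4(P^2-PQ+Q^2)$ move) of $v^2+2P^2-2PQ+2Q^2$. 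Two remarks. First, a simplification you do not exploit: for $(1,1,5)$ and $(1,2,4)$ a check modulo $8$ shows that \emph{every} representation of $8n+7$ by the unrestricted form already has all three variables odd, so the odd-variable constraint is free and the question is simply whether the form represents $8n+7$ at all. Second---and this is the gap---you assert regularity of these three ternary forms but do not verify it. They are in fact regular, and once that is granted your reductions go through; but as written the substantive half of your proof rests on three unproved claims. For a self-contained argument you must either compute each genus and show it contains a single class, or actually carry out the theta-identity computation you attribute to Liouville rather than merely naming it.
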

In fact, the following simple condition determines whether a fixed set of positive integers $(b_1,\dots,b_k)$ give rise to a sum of triangular numbers $\sum_{i=1}^k b_k T_{x_k}$ which represents every integer as shown by the author in \cite{Kane3}.
\begin{theorem}\label{thm8}
Fix the sequence $b_1\leq \dots\leq b_k\in\nature$.  Then
\newcounter{Lcount}
\begin{list}{\arabic{Lcount}.}{\usecounter{Lcount}}
\item The sum of triangular numbers
$$
f(x):=f_b(x):=\ssum{i=1}{k} b_iT_{x_i}
$$  
represents every positive integer if and only if $f_b$ represents the integers $1$, $2$, $4$, $5$, and $8$.
\item  The corresponding diagonal quadratic form $Q(x)=\ssum{i=1}{k}b_ix_i^2$ with $x_i$ all odd represents every integer of the form $8n+\ssum{i=1}{k}b_i$
with $n\geq 0$ if and only if it represents $8+\ssum{i=1}{k}b_i$, $16+\ssum{i=1}{k}b_i$, $32+\ssum{i=1}{k}b_i$, $40+\ssum{i=1}{k}b_i$, and $64+\ssum{i=1}{k}b_i$. 
\end{list}
\end{theorem}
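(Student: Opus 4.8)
The plan is to first collapse the two parts of the theorem into a single assertion, and then prove that assertion by an \emph{escalation} argument in the style of the Conway--Schneeberger fifteen theorem.

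For the reduction, use the elementary identity $8T_x+1=(2x+1)^2$. Summing against the weights $b_i$ gives $8f_b(x)+\sum_i b_i=\sum_i b_i(2x_i+1)^2=Q(y)$, where $y_i=2x_i+1$ runs over all odd integers as $x_i$ runs over $\nature\cup\{0\}$. Hence $f_b$ represents $n$ exactly when $Q$ represents $8n+\sum_i b_i$ using a vector with all coordinates odd; in particular $n\in\{1,2,4,5,8\}$ corresponds to $8n+\sum_i b_i\in\{8+\sum b_i,\,16+\sum b_i,\,32+\sum b_i,\,40+\sum b_i,\,64+\sum b_i\}$, and ``$f_b$ represents every positive integer'' corresponds to ``$Q$ represents, with odd coordinates, every integer $8n+\sum_i b_i$ with $n\ge 0$'' (the case $n=0$ being witnessed by the all-ones vector, which is odd). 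So (1) and (2) are literally the same statement, and it suffices to prove, say, (1).

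For (1) the ``only if'' direction is trivial, so assume $f_b$ represents $1,2,4,5,8$. Order $b_1\le\dots\le b_k$; since every summand is nonnegative, $f_b$ representing $1$ forces $b_1=1$. Build a rooted tree of partial sums: the root is the empty sum; given a node $g=\sum_{i\le j}b_iT_{x_i}$, if $g$ represents every positive integer it is a leaf, and otherwise let $t=t(g)$ be its \emph{truant}, the least positive integer $g$ does not represent, and attach one child for each way to adjoin a term $b_{j+1}T_{x_{j+1}}$ so that the enlarged sum represents $t$. Since $g$ misses $t$ but $f_b$ does not, any $f_b$-representation of $t$ must activate a new variable, so some new weight satisfies $b_m\le b_mT_{x_m}\le t$; thus each node has only finitely many children, all with $b_{j+1}\le t$. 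Two facts are then needed: (A) every truant occurring in the tree lies in $\{1,2,4,5,8\}$; and (B) the tree is finite, every branch reaching a leaf after boundedly many steps. Granting (A) and (B): starting from the forced $b_1=1$ one walks down the tree \emph{inside} $f_b$ --- at each node the truant is one of $1,2,4,5,8$, hence is represented by $f_b$, hence a genuine term of $f_b$ can serve as the next escalation step --- arriving at a leaf, a universal sub-sum of $f_b$, so $f_b$ is universal.

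Establishing (A) and (B) is a finite, largely mechanical computation: from $b_1=1$ one gets the dimension-two escalators $(1,1)$ and $(1,2)$, with truants $5$ and $4$; escalating $(1,2)$ yields only $(1,1,2),(1,2,2),(1,2,3),(1,2,4)$, all universal by the Liouville theorem quoted above; escalating $(1,1)$ yields $(1,1,1),\dots,(1,1,5)$, universal by Liouville except for $(1,1,3)$, whose truant is $8$; escalating $(1,1,3)$ produces $(1,1,3,b_4)$ for $1\le b_4\le 8$, most containing a universal ternary sub-sum and the remaining few (e.g.\ $(1,1,3,3),(1,1,3,6),(1,1,3,7),(1,1,3,8)$) needing to be checked directly, with any survivor escalated once more. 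The only non-elementary ingredient is verifying universality of the finitely many quaternary (and possibly quinary) escalators with no visible universal ternary sub-sum: via the identity above this asks whether a specific diagonal quaternary quadratic form represents every integer in a fixed arithmetic progression using an all-odd vector. I expect this to be the main obstacle. It can be handled by the local--global machinery for quaternary forms: such a form represents all sufficiently large locally represented integers (Tartakowsky), the oddness constraint is encoded by a congruence (equivalently, one passes to a shifted lattice, whose theta series is a weight-$2$ modular form whose cusp-form part has coefficients $O(n^{1/2+\varepsilon})$ by Deligne's bound while the Eisenstein part grows like a positive power of $n$), and the bounded exceptional range is then cleared by explicit computation. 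For the universal case considered here everything can be carried out unconditionally; the set $\{1,2,4,5,8\}$ is simply the complete list of truants that this bookkeeping turns up.
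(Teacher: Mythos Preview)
This theorem is not proved in the present paper: it is quoted from the author's separate preprint \cite{Kane3} (``The Triangular Theorem of Eight''), so there is no proof here to compare against. That said, the paper does use the same escalation framework (in the proofs of Theorems~\ref{SThm} and~\ref{OddThm}), and several of the specific quaternary forms you isolate, notably $[1,1,3,3]$ and $[1,1,3,6]$, are analysed there in the odd-integer setting; so your outline is very much in the spirit of the paper and almost certainly of \cite{Kane3} as well.

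Your reduction of part (2) to part (1) via $8T_x+1=(2x+1)^2$ is exactly right, and the escalation tree through depth three is correctly worked out: the only non-universal ternary escalator is $[1,1,3]$, with truant $8$, leaving the quaternary forms $[1,1,3,k]$ for $3\le k\le 8$, of which $k=4,5$ contain universal ternary subforms. One point worth sharpening: you propose to handle the remaining quaternary cases $[1,1,3,3]$, $[1,1,3,6]$, $[1,1,3,7]$, $[1,1,3,8]$ by Tartakowsky/Deligne-type bounds on a shifted-lattice theta series. This works, but is heavier than necessary. Because $(1,1,3)$ is genus~$1$, the integers missed by $[1,1,3]$ are \emph{exactly} those $n$ with $8n+5=3^{2r+1}(3\ell+2)$; this explicit description lets one dispose of each $[1,1,3,k]$ by a short congruence argument. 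For instance, for $k=7$ or $k=8$ a single comparison of $8n+5$ and $8(n-k)+5$ modulo~$3$ already gives a contradiction for $n\ge k$; for $k=6$ one is forced into $n\equiv 5\pmod 9$ and then $x_4=2$ (so $n-18$) yields $8(n-18)+5=(8n+5)-3^2\cdot 16$, whose $3$-adic valuation is even, hence represented; and $k=3$ can be treated either by a similar mod-$27$ analysis or, as in the paper, by noting that $(1,1,3,3)$ is genus~$1$. So no appeal to effective bounds for weight-$2$ cusp forms is needed, and the entire argument stays elementary. Apart from this overkill at the last step, your proposal is sound.
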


Recently, Conway and Schneeberger proved a very nice similar condition for positive definite quadratic forms whose corresponding matrix has integer entries, but without publishing their results.  
\begin{theorem*}[Conway-Schneeberger]
A positive definite quadratic form $Q(x)=x^tAx$ where $A$ is a positive symmetric matrix with integer coefficients represents every positive integer if and only if it represents the integers $1,2,3,5,6, 7, 10, 14,$ and $15$.  
\end{theorem*}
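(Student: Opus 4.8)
The forward implication is trivial, so the plan is to prove the converse: a positive-definite integer-matrix form $Q(x)=x^tAx$ representing every value of $\mathcal C:=\{1,2,3,5,6,7,10,14,15\}$ represents every positive integer. I would run an \emph{escalation} argument. Regard $Q$ as the integral lattice $L=(\mathbb Z^n,A)$ with $w\cdot v:=w^tAv\in\mathbb Z$, and for a positive-definite integral lattice $M$ call the least positive integer $M$ does not represent its \emph{truant} (undefined exactly when $M$ is universal). Form a rooted tree: the root is the rank-$0$ lattice (truant $1$), and the children of a non-universal node $M$ with truant $t$ are, up to isometry, the lattices spanned by $M$ together with one extra vector $v$ having $Q(v)=t$ and $w\cdot v\in\mathbb Z$ for $w$ in a basis of $M$; by Cauchy--Schwarz, $(w\cdot v)^2\le Q(w)Q(v)$, so there are finitely many. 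Call the lattices in this tree \emph{escalators}. Since $L$ represents $1$ it contains a norm-$1$ vector, hence the unique rank-$1$ escalator; and inductively, if $L$ contains a rank-$k$ escalator $M$ which is not universal, then --- granting the computation below --- its truant $t$ lies in $\mathcal C$, so $L$ represents $t$, a norm-$t$ vector of $L$ lies outside $M$, and together with $M$ it spans a rank-$(k+1)$ escalator inside $L$. The resulting ascending chain of escalators in $L$ cannot go on forever if the tree is finite, so it reaches a universal escalator; hence $L$ is universal.

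Everything thus reduces to a finite computation. Enumerating the tree one rank at a time is mechanical --- a child's new diagonal entry is a parent's truant, its off-diagonal entries are bounded by Cauchy--Schwarz, so each rank contributes finitely many escalators, explicitly listable, and each escalator's truant is found by a bounded search over $\{1,\dots,15\}$ --- and one finds that the tree has bounded height, the escalators using at most four variables. The substantive point, exactly what makes the tree finite, is the claim that the truant of every non-universal escalator lies in $\mathcal C$, equivalently that every escalator of top rank is universal; in fact all nine values of $\mathcal C$ arise as truants, which is why precisely these numbers enter. (Minimality of $\mathcal C$ as a test set, not needed for the stated biconditional, is settled separately by producing for each $c\in\mathcal C$ a non-universal form representing $\mathcal C\setminus\{c\}$.) I expect these universality verifications to be the main obstacle, being the only part of the argument that is not \emph{a priori} finite.

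To carry them out I would split on the class number of the genus. First, a positive-definite form in at most three variables is not universal: in three variables it is anisotropic over $\mathbb R$, hence by Hilbert reciprocity over some finite prime $p$, hence omits a whole $p$-adic square class and so infinitely many positive integers; so every universality check concerns a quaternary escalator $Q$, whose theta series $\theta_Q$ has weight $2$. If the genus of $Q$ has class number one, universality reduces to a finite congruence condition, since $Q$ then represents $n$ over $\mathbb Z$ iff it does over every $\mathbb Z_p$ (Hasse--Minkowski, with the $p$-adic structure at the anisotropic primes). If the class number is larger, I would use the decomposition $\theta_Q=E_Q+g_Q$ into the genus-Eisenstein series and a cusp form: once $Q$ represents $n$ over all $\mathbb Z_p$, the $n$-th coefficient of $E_Q$ --- a product of local densities times an archimedean factor --- is $\gg_\varepsilon n^{1-\varepsilon}$, whereas by the Ramanujan--Petersson bound (Deligne) the $n$-th coefficient of the weight-$2$ cusp form $g_Q$ is $O_\varepsilon(n^{1/2+\varepsilon})$; hence $r_Q(n)>0$ for all $n$ past an \emph{effective} bound, and the finitely many smaller $n$ are checked directly. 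The genuine labor is thus (a) enumerating the tree and (b) making this Eisenstein-versus-cusp comparison effective for each top-rank escalator (many quaternary cases also yield to elementary representation arguments). I would note finally that for the problems of the present paper --- sums of triangular numbers, i.e.\ genuinely \emph{ternary}-type and shifted-lattice problems, where one must control the representations of \emph{all} large integers rather than just the first fifteen --- the weight-$2$ input is replaced by weight-$3/2$ theta series and Deligne's clean bound by the far deeper subconvex estimates of Duke and Iwaniec type, which is precisely why the effective results of the later sections are stated under Generalized Riemann Hypotheses.
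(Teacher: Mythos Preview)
The paper does not supply its own proof of the Conway--Schneeberger theorem; it is quoted as background and the reader is referred to Bhargava's argument in \cite{Bhargava1}. Your proposal is precisely a sketch of Bhargava's escalation proof, so there is no independent proof in the paper to compare against, and the approach you outline is the one the paper cites.

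Two minor corrections relative to how Bhargava's argument actually runs. First, the escalator tree does \emph{not} stop at rank four: among the several thousand quaternary escalators one meets, some fail to be universal and must be escalated once more. Bhargava handles this not by enumerating all rank-five escalators but by showing that every such escalation contains a universal quaternary sublattice already on the list; so your claim that all universality checks are quaternary is ultimately correct, but the tree itself has height five. Second, Bhargava's verification of universality for the quaternary escalators is largely elementary---he exploits ternary sublattices that are regular or miss only a short explicit list of integers---rather than the Eisenstein-series/Deligne comparison you describe. That analytic route (essentially Hanke's method for the 290-theorem) would certainly work, but it is heavier machinery than the 15-theorem requires. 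Your closing remark about why the present paper's ternary triangular problems force weight-$3/2$ input and hence GRH is accurate.
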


Bhargava gave an elegant simpler proof of the Conway-Schneeberger 15 theorem in \cite{Bhargava1}, in addition to showing more generally that for any set $S\subseteq \nature$ it is always sufficient to check whether $Q$ represents a finite subset $S_0$, and showed the set $S_0$ for the two sets $S=\{ 2n+1: n \in \integer^{+}\}$ and $S=\{ p\text{ prime}\}$.

In this paper, we will consider a similar generalization of Theorem \ref{thm8}.
\begin{theorem}\label{SThm}
Let a set $S\subseteq \nature$ be given.  Then there is a finite subset $S_0$ of $S$ such that $f(x)$ represents $S$ if and only if $f$ represents $S_0$.
\end{theorem}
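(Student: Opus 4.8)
The plan is to mimic Bhargava's escalation argument, with Theorem~\ref{thm8} in the role of the fifteen theorem. Throughout I identify a sum of triangular numbers $f=\sum_{i=1}^{k}b_iT_{x_i}$ with the multiset $\{b_1,\dots,b_k\}$ of its coefficients, partially ordered by inclusion of multisets; since any variable may be set to $0$ (contributing $T_0=0$), a sub-sum of $f$ represents a subset of the integers represented by $f$, so ``$f$ represents $S$'' is monotone upward and ``$f$ does not represent $S$'' monotone downward. If $S$ is finite we may take $S_0=S$, so assume $S$ is infinite, and write $R_f:=\nature\setminus\{\,f(x):x\,\}$. A form with $R_f=\emptyset$ represents $S$, so it suffices to produce a finite $S_0\subseteq S$ which meets $R_f\cap S$ for every $f$ with $R_f\cap S\neq\emptyset$; equivalently, to bound the \emph{truant} $t(f):=\min(R_f\cap S)$ over all such $f$.

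To this end I would construct an escalator tree. Its root is the empty sum, of truant $\min S$. Given a node $f$ that does not represent $S$, with truant $t=t(f)$, its children are the forms $f\cup\mu$, over nonempty multisets $\mu$ of positive integers for which $f\cup\mu$ represents $t$; writing such a representation as $t=u+\sum_m c_mT_{\beta_m}$ with $u$ represented by $f$ and each $\beta_m\geq1$ forces $c_m\leq t$ and $|\mu|\leq t$, so every node has only finitely many children. Put $S_0:=\{\,t(f):f\text{ a node}\,\}$. If the tree is finite, this $S_0$ works: were some $g$ to represent $S_0$ but not $S$, build $e_0\subsetneq e_1\subsetneq\cdots\subseteq g$ by letting $e_0$ be the root and, given a node $e_i\subseteq g$, observing that $e_i$ fails to represent $S$ (downward monotonicity), so $t(e_i)$ is defined and lies in $S_0$, so $g$ represents $t(e_i)$; if $t(e_i)=t(g)$ this already contradicts that $g$ represents $S_0$ (as $t(g)\in R_g$), and otherwise a representation of $t(e_i)$ by $g$ uses, beyond a value represented by $e_i$ (necessarily $<t(e_i)$), a nonempty sub-multiset $\mu_{i+1}$ of the remaining coefficients of $g$, making $e_{i+1}:=e_i\cup\mu_{i+1}$ a child of $e_i$ contained in $g$. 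Since the term count strictly increases, the construction halts short of the term count of $g$, forcing some $t(e_i)=t(g)$ and the desired contradiction.

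It remains to show the escalator tree is finite, and --- since truants strictly increase along every root-to-leaf path --- this is the same as bounding the length of escalator forms (equivalently, the depth of the tree); here the real work lies. Imprimitive nodes, whose coefficient-gcd $d$ exceeds $1$, represent only multiples of $d$; this bounds their truant by $\min\{s\in S:d\nmid s\}\leq\min\{s\in S:p\nmid s\}$ for a prime $p\mid d$, which is at most $\min S$ for all but the finitely many primes $p\leq\min S$, except when $d$ divides every element of $S$ --- a case that reduces, by induction on $\gcd(S)$, to a set with strictly smaller gcd. The essential case is thus that of primitive escalator nodes, and for these one must show: a primitive node that has been escalated sufficiently often necessarily contains a sub-sum which represents all of $\nature$ --- e.g.\ a universal ternary sum of triangular numbers, or a sub-sum representing $1,2,4,5,8$ (universal by Theorem~\ref{thm8}(1)) --- so that, by upward monotonicity, the node itself represents $\nature\supseteq S$ and is a leaf. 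Establishing this uniformly in $S$ --- tracking which (necessarily small) coefficients the escalation process may introduce, and combining this with Theorem~\ref{thm8} and the classical representation theory of ternary and quaternary sums of triangular numbers to show that deep primitive nodes are leaves --- is the main obstacle; the descent in the previous paragraph, by contrast, should be routine bookkeeping.
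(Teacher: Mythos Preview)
Your escalator framework and the descent argument (that any $g$ representing $S_0$ must represent $S$) are set up correctly and match the paper's approach. The genuine gap is the finiteness of the tree, which you yourself flag as ``the main obstacle'' and do not resolve.

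More seriously, the strategy you propose for that step --- showing that a primitive node escalated sufficiently often must contain a sub-sum representing all of $\nature$, via Theorem~\ref{thm8} --- cannot work as stated. Escalation only forces representation of truants, and truants lie in $S$; nothing in the process compels the form to represent integers outside $S$. Concretely, take $S$ to be the odd positive integers. Along an escalator path beginning $[1,b_2,b_3,\dots]$ with $b_i\geq 3$ for all $i\geq 2$ (such paths exist: $[1]$ has truant $5$, $[1,5]$ has truant $7$, and so on), no node ever represents $2$, since $T_x\neq 2$ for all $x$ and $b_iT_{x_i}\geq 3$ whenever $x_i\geq 1$. Hence no node on such a path contains a universal sub-sum, and your argument supplies no reason the path should terminate. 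Leaves of this tree (forms representing every odd integer) need not, and typically do not, represent all of $\nature$. Your treatment of the imprimitive case is likewise only a bound on truants, not on depth.

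The paper closes the gap by an analytic argument that has nothing to do with Theorem~\ref{thm8}. Once a node $f$ has dimension at least $4$, the theta series of the associated $Q_{\text{odd}}$ has weight $\geq 2$ and splits as a nonzero Eisenstein series plus a cusp form; in this weight range the Eisenstein coefficients provably dominate the cuspidal ones. Consequently each dimension-$4$ node fails to represent only finitely many full arithmetic progressions together with finitely many ``sporadic'' integers. Termination then becomes combinatorial: every further escalation either disposes of one sporadic integer or, by introducing a coefficient that breaks one of the missed progressions, converts that progression into finitely many new sporadics at the next level. Since the supply of progressions is finite, the subtree below any dimension-$4$ node has bounded depth, and K\"onig's lemma finishes the proof. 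It is this modular-forms input at $k\geq 4$ --- not a reduction to universality --- that drives the finiteness.
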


A simple computer calculation leads us to conjecture a set $S_0$ when $S$ is the set of all odd integers, for example.
\begin{conjecture}\label{OddConjecture}
A sum of triangular numbers $f$ represents all odd integers if and only if it represents the integers
$$
1,5,7,9,11,13,17,19, 25, 29, 35,49,89.
$$
\end{conjecture}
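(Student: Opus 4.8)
The forward implication is immediate, as each of the thirteen listed integers is itself odd. For the converse the plan is to adapt the \emph{escalator} technique of Bhargava \cite{Bhargava1} to sums of triangular numbers, arranging matters so that the set $\{1,5,7,9,11,13,17,19,25,29,35,49,89\}$ emerges as the complete list of \emph{truants} occurring in an escalator tree. Define the truant of a form $g=\sum_i b_i T_{x_i}$ to be the smallest odd integer it fails to represent, and build a rooted tree of coefficient tuples $b_1\le\dots\le b_j$ as follows. The root is the empty form, whose truant is $1$; since the least positive value of a single term $b_iT_{x_i}$ equals $b_i$, representing $1$ forces $b_1=1$. Given a node $g$ that is not odd-universal, with truant $t$, its children are the tuples obtained by appending a single coefficient $a$ with $b_j\le a\le t$; the upper bound $a\le t$ is forced because any generator participating in a representation of $t$ contributes at least $a$ and hence is at most $t$. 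For example, the one-term form $(1)$ represents exactly the odd triangular numbers $1,3,15,21,\dots$, so it has truant $5$ and spawns the children $(1,1),(1,2),(1,3),(1,4),(1,5)$; a child need not yet represent $t$, so the truant may persist until enough generators have accrued. The theorem will follow from the \textbf{Main Claim}: this tree is finite and the truants of its non-universal nodes are precisely the thirteen listed integers. Granting this, suppose $f$ represents all thirteen values but is not odd-universal. Starting from the root we trace a path inside $f$: at a node $g=f_{J}$ (a subform of $f$) with truant $t$, the value $t$ lies in the list and so is represented by $f$; since $f_J$ misses $t$, some generator outside $J$ with coefficient $\le t$ occurs in a representation of $t$, and appending it yields a child of $g$ that is still a subform of $f$. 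As the tree is finite, this path reaches an odd-universal leaf, which being a subform of $f$ forces $f$ itself to be odd-universal, a contradiction.

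It remains to establish the Main Claim, for which I would pass from triangular numbers to quadratic forms via Theorem \ref{thm8} and the identity $8T_x+1=(2x+1)^2$: the form $f_b$ represents the odd integer $m$ exactly when $Q_b(x)=\sum_i b_i x_i^2$ represents $8m+\sum_i b_i$ using only odd $x_i$. The generating series enumerating such constrained representations is a modular form of half-integral weight $k/2$ on a congruence subgroup, which I would split into an Eisenstein part and a cusp part. When it does not vanish for local reasons, the Eisenstein coefficient is a product of local densities of order $n^{k/2-1}$, whereas the cusp contribution is of strictly smaller order once $k\ge 4$ (using Deligne's bound $n^{1/2+\varepsilon}$ on weight-two coefficients when $k=4$, and the trivial Hecke bound when $k\ge 5$). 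Hence every escalator of rank $k\ge 4$ represents all sufficiently large admissible integers, with the finitely many small exceptions verifiable directly. This bounds the depth of the tree---nodes of rank $\ge 4$ can acquire only finitely many further truants and their children are universal---so the tree is finite and its high-rank leaves are certified.

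The crux, and the expected main obstacle, is the ternary layer $k=3$, where the associated form has weight $3/2$ and the Eisenstein main term is only of order $n^{1/2}$, below the trivial bound $n^{3/4}$ for the weight-$3/2$ cusp part; the clean comparison above therefore fails. Two genuinely arithmetic phenomena must be controlled. First, a ternary form may omit an entire \emph{spinor exceptional} square class---infinitely many integers missed for reasons invisible to congruences---so any escalator possessing such a class meeting the progression $\{8m+\sum_i b_i:\ m\ \mathrm{odd}\}$ must be recognized as non-universal and escalated further; this detection is a finite computation on the spinor genus. Second, away from these classes the deep subconvex bounds of Duke and of Duke--Schulze-Pillot show that all sufficiently large eligible integers are represented, but the threshold is ineffective, resting ultimately on the exclusion of Siegel zeros. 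It is precisely here that, as announced in the abstract, I would invoke the relevant Generalized Riemann Hypotheses to render the threshold $X$ effective, after which the finitely many ternary escalators below $X$ are settled by explicit machine computation---discarding those that truly fail to be odd-universal and confirming that the rest introduce no truant beyond the listed values.

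Assembling the three regimes, the escalator tree has bounded rank and is finite; enumerating it by computer and reading off the truants of its non-universal nodes yields exactly $1,5,7,9,11,13,17,19,25,29,35,49,89$. Together with the path-tracing argument of the first paragraph, this shows that a sum of triangular numbers represents every odd integer if and only if it represents these thirteen, proving the conjecture conditionally on the Generalized Riemann Hypotheses used to control the ternary escalators.
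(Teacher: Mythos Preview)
Your overall architecture---escalator tree, path-tracing to reduce to the Main Claim, Eisenstein-versus-cusp comparison for depth $\geq 4$, and a GRH-conditional treatment of the ternary layer---is exactly the paper's. But there is a real gap in your ternary mechanism. You propose to combine the Duke / Duke--Schulze-Pillot subconvex bound on the weight-$3/2$ cusp part with GRH for Dirichlet $L$-functions (to exclude Siegel zeros and make the Eisenstein lower bound effective), and then check up to the resulting threshold by machine. The paper explicitly considers and discards this route: even granting GRH for Dirichlet $L$-functions and taking the implied constant in Duke's bound to be $1$, the threshold is computationally hopeless. The paper's working method is different in kind: for each ternary node it uses Waldspurger's theorem to express $|a_g(n)|^2$ as a central value of the twisted $L$-function of the weight-$2$ Shimura lift, and then assumes GRH \emph{additionally} for these weight-$2$ newform $L$-functions (this is the second GRH hypothesis in the statement of Theorem~\ref{OddThm}). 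That is what brings the bounds down to the $10^{8}$--$10^{12}$ range recorded in Table~\ref{SufficientTable}, which are then verified directly.

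Beyond this, your proposal is a strategy rather than a proof of the Main Claim. The substance of the paper's argument is a node-by-node analysis at depth $3$: Liouville's seven universal triples; genus-one arguments for $[1,1,3]$, $[1,2,5]$, $[1,2,10]$, $[1,3,4]$, $[1,4,6]$ reducing everything to explicit congruence obstructions; a spinor-genus-one argument for $[1,4,4]$; Proposition~\ref{equivalentprop} replacing eight further ternary nodes by equivalent quadratic forms without the odd restriction; and then the GRH-based bound and computer check for those eight. Each subtree is traced to its leaves and its truants are read off, and it is this enumeration that actually produces the list $\{1,5,7,9,11,13,17,19,25,29,35,49,89\}$. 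Saying ``enumerate by computer and read off the truants'' does not substitute for doing so; in particular, the three depth-$3$ leaves $[1,2,6]$, $[1,2,9]$, $[1,4,5]$ are precisely where the GRH assumption is unavoidable, and recognizing them as leaves is the heart of the matter.
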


Unlike in Bhargava's theorem however, current techniques are insufficient for computationally determining a suitable $S_0$ for most choices of $S$, due to ineffective bounds for the class numbers of imaginary quadratic fields.  We shall briefly explain this complication.  Let $f(x)=b_1T_{x_1}+b_2T_{x_2}+b_3T_{x_3}$ be given such that $f$ represents all of the integers in $S$, but the corresponding (diagonal) quadratic form is not (spinor) genus $1$.  Then the corresponding weight $3/2$ modular form with $x_i$ all odd can be written as an Eisenstein series plus a cusp form.  Siegel has shown that the Fourier coefficients (with bounded divisibility at the anisotropic primes) of the Eisenstein series grow like the class number \cite{Siegel2}.  Siegel has also shown that the class number grows faster than $n^{\frac{1}{2}-\epsilon}$ \cite{Siegel1}, but the bound was ineffective because Siegel showed the result by first assuming a certain Riemann hypothesis and showing the result, and then assuming the Riemann hypothesis was false, and getting a different constant, depending on the location of possible zeros.  The best known effective bound is given by Oesterle \cite{Oesterle1}, but is only $O(\log n)$.  After decomposing the cusp form into $g_1+g_2$ where $g_1$ is in the space of lifts of one dimensional theta series and the Shimura lift of $g_2$ is cuspidal, we note that the coefficients of $g_2$ grow slower than $n^{\frac{1}{2}-\frac{1}{28}+\epsilon}$ by the work of Duke \cite{Duke1}, and $g_1$ is supported at finitely many square classes with the same growth as the Eisenstein series.  Since the coefficients with bounded divisibility at the anisotropic primes of the Eisenstein series grow faster than the coefficients of $g_2$, every sufficiently large $n$ with bounded divisibility by the anisotropic primes and outside of the support of the coefficients of $g_1$ must be represented.  However, we do not know the implied constant from Siegel's ineffective bound, so we cannot effectively determine when $n$ is sufficiently large.

Assuming the Generalized Riemann Hypothesis for Dirichlet $L$-functions, and using Duke's (effective) bound of $n^{\frac{3}{7}+\epsilon}$ \cite{Duke1}, we would have an algorithm to determine whether $f$ represents $s$ or not.  However, although Duke's result is effective, the author is unaware of any paper where the constant is explicitly computed.  Even assuming that the implied constant was $1$, the bound obtained is entirely infeasible with current computer technology.  Using an idea of Ono and Soundararajan \cite{OnoSound1}, and a generalization by the author \cite{Kane1}, we will be able to determine an algorithm to determine the set $S_0$ under the additional Generalized Riemann Hypothesis for $L$-functions of weight $2$ newforms.  For notational ease, we will refer to an integer which is locally represented at every prime as an eligible integer.

\begin{theorem*}[Ono-Soundararajan \cite{OnoSound1}]
Assume GRH for Dirichlet $L$-functions and GRH for $L$-functions of weight $2$ newforms.  Then the eligible integers not represented by Ramanujan's ternary quadratic form $x^2+y^2+10z^2$ are precisely
$$
3,7,21, 31,33,43, 67, 79, 87, 133,217,219,223,253,307,391,679,2719.
$$
\end{theorem*}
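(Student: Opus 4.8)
The plan is to turn the representation problem for $Q(x,y,z)=x^2+y^2+10z^2$ into a finite computation, by splitting the generating series of representation numbers into a main term bounded from below and an error term bounded from above, both effectively under the stated hypotheses. Write $\theta_Q(\tau)=\sum_{n\geq 0}r_Q(n)q^n$ with $q=e^{2\pi i\tau}$; this is a modular form of weight $3/2$ on $\Gamma_0(40)$ with a quadratic character, and its decomposition into Eisenstein and cuspidal parts reads $\theta_Q=E+g$, where the $n$-th coefficient of $E$ is the genus representation number $r_{\mathrm{gen}}(n)$ and $g$ is a cusp form of weight $3/2$. Thus $r_Q(n)=r_{\mathrm{gen}}(n)+a_g(n)$, and it suffices to prove $r_{\mathrm{gen}}(n)>|a_g(n)|$ for every eligible $n$ beyond an \emph{explicit} bound.

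First I would make Siegel's formula for $r_{\mathrm{gen}}(n)$ explicit as a product of local densities: it vanishes exactly when $n$ is not eligible, and for eligible $n$ whose valuation at the anisotropic prime of $Q$ (here $p=2$) is bounded, it has size comparable to $\sqrt{n}\,L(1,\chi_n)$ for an explicit quadratic character $\chi_n$, up to a bounded Euler product, hence comparable to a class number of an imaginary quadratic order. Siegel's lower bound for this class number is ineffective, but GRH for the Dirichlet $L$-function $L(s,\chi_n)$ makes it effective: $r_{\mathrm{gen}}(n)\gg \sqrt{n}/\log\log n$ with an explicit constant. This is the first place the hypotheses enter.

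Next I would bound $a_g(n)$ from above. Decompose $g=g_1+g_2$, where $g_1$ lies in the span of one-dimensional theta series — hence is supported on only finitely many square classes, contributing a finite set of candidate exceptions that the final search will recover — and $g_2$ has a cuspidal Shimura lift to a weight $2$ form. Waldspurger's theorem then expresses $a_{g_2}(tm^2)^2$, for $t$ squarefree, through the central value $L(\tfrac12,F\otimes\chi_t)$ of a weight $2$ newform $F$ twisted by $\chi_t$; GRH for the $L$-functions of such newforms gives $L(\tfrac12,F\otimes\chi_t)\ll_\epsilon t^\epsilon$ with explicit constants, whence $a_{g_2}(n)\ll_\epsilon n^{1/4+\epsilon}$ effectively. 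This is exactly the gain over Duke's unconditional exponent $3/7$ — or over the conditional but astronomically large bound from GRH for Dirichlet $L$-functions alone — that makes the remaining range searchable.

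Combining the two estimates yields an explicit $N_0$ such that every eligible $n>N_0$ with bounded $2$-valuation is represented by $Q$; the finitely many square classes excluded along the way (those supporting $g_1$, and those of large $2$-valuation, the latter dealt with by a short descent $n\mapsto n/4$) add only finitely many further candidates. The argument then finishes by enumerating all eligible $n\leq N_0$ and testing directly whether $x^2+y^2+10z^2$ represents each one; the integers that are not represented are precisely the listed $3,7,21,\dots,2719$. \textbf{The main obstacle} is quantitative rather than structural: every implied constant — in Siegel's mass formula, in the effective class number bound, in Waldspurger's formula and the Shimura correspondence, and in the subconvexity input — must be tracked so that $N_0$ lands in a computationally feasible range, and it is precisely the additional GRH for weight $2$ newforms that pulls $N_0$ down to that range.
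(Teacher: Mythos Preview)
Your proposal is correct and follows essentially the same approach that the paper describes. The paper does not itself reprove this theorem (it is quoted from Ono--Soundararajan), but the method you outline---the $\theta=E+g$ decomposition, the further split $g=g_1+g_2$ with $g_1$ in the span of unary theta lifts and $g_2$ in $U^{\perp}$, Waldspurger's formula tying $|a_{g_2}(n)|^2$ to a central $L$-value of the weight~$2$ Shimura lift, and the two GRH inputs making both the class-number lower bound and the cuspidal upper bound explicit---is exactly the template the paper lays out in its proof of Theorem~\ref{DetermineS0Thm} and then implements for its own ternary forms.
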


Using the ideas in \cite{OnoSound1}, \cite{Kane1}, and \cite{Kane2}, we obtain the following result.
\begin{theorem}\label{DetermineS0Thm}
Assume GRH for Dirichlet $L$-functions and GRH for $L$-functions of weight $2$ newforms.  Let a set of positive integers $S$ of nonzero density be given.  Then there exists an explicit algorithm to determine the (unique) smallest (finite) set $S_0$ such that a sum of triangular numbers $f$ represents $S$ if and only if $f$ represents $S_0$.

Moreover, this algorithm is computationally feasible for sets $S$ which contain many small integers.
\end{theorem}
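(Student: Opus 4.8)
The plan is to construct a Bhargava-style escalation tree of sums of triangular numbers and to make every step of it effective, replacing the one ineffective input --- Siegel's lower bound for class numbers \cite{Siegel1} --- by an explicit substitute valid under the two Riemann hypotheses, along the lines of \cite{OnoSound1} and its generalizations \cite{Kane1,Kane2}. By Theorem~\ref{SThm} the finite set $S_0$ already exists, so the task is purely to compute it. First I would enumerate $S=\{s_1<s_2<\cdots\}$ and build the rooted tree whose nodes are sums of triangular numbers $g=\sum_i b_iT_{x_i}$ (the \emph{escalator forms}), each tagged with its truant, the least element of $S$ not represented by $g$: the root is the zero form, with truant $s_1$, and the children of a node with truant $t$ are the forms $g+b\,T_x$ with $1\le b\le t$ (the bound holds because any representation of $t$ by the enlarged form must use the new variable nontrivially), so each node has finitely many children. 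Exactly as in \cite{Bhargava1}, if a sum of triangular numbers $f$ fails to represent $S$, then some subform of $f$ appears in the tree and its truant, being missed by that subform, is missed by $f$; hence, if $\mathcal{T}$ is the set --- finite, by Theorem~\ref{SThm} --- of all truants occurring in the tree, then $f$ represents $S$ if and only if $f$ represents $\mathcal{T}$. From $\mathcal{T}$ I would then pass to $S_0$ by successively discarding any truant whose removal leaves a set that still forces representation of $S$ (a test which is itself a finite escalation, now with a finite target set); the minimal set reached is independent of the order of discarding, which gives the unique $S_0$.

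The heart of the algorithm is the node subroutine: \emph{given an escalator form $g$, decide whether $g$ represents all of $S$, and otherwise return its truant.} I would split on the rank $r$ of $g$. If $r\le 2$, the set of values of $g$ has density zero, so $g$ cannot represent the positive-density set $S$, and one simply tests $s_1,s_2,\dots$ until one fails. If $r\ge 4$, the second part of Theorem~\ref{thm8} turns the question into whether $Q(x)=\sum_i b_ix_i^2$ restricted to odd vectors represents $\{8s+\sum_i b_i:s\in S\}$; this is governed by a theta series of weight $r/2\ge 2$, and bounding its cusp-form coefficients by Deligne's theorem against the (effective, in this range) lower bound for the Eisenstein coefficients yields an explicit threshold beyond which every eligible integer is represented, leaving only a finite check below the threshold and the (decidable) question of whether every element of $S$ is eligible for $g$. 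The hard case is $r=3$: here the theta series has weight $3/2$; writing it as Eisenstein plus cusp form and decomposing the cusp form as $g_1+g_2$, with $g_1$ in the span of lifts of unary theta series and the Shimura lift of $g_2$ cuspidal, the argument of \cite{OnoSound1,Kane1,Kane2} gives --- under GRH for weight-$2$ newforms --- an \emph{explicit} bound $\ll n^{1/4+\epsilon}$ for the $n$-th coefficient of $g_2$ (via the Shimura correspondence and a conditional bound for a central $L$-value), while GRH for Dirichlet $L$-functions converts Siegel's estimate into an \emph{explicit} lower bound $\gg n^{1/2-\epsilon}$ for the Eisenstein coefficients with bounded divisibility at the anisotropic primes. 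Since $g_1$ is supported on finitely many square classes, every eligible $n$ that is large enough --- now an explicit number --- and outside those square classes is represented; the finitely many square classes and the finitely many eligible $n$ below the threshold are then checked directly, and because $S$ has positive density it is not contained in finitely many square classes, so this decides the question for $g$.

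For the feasibility statement, note that if $S$ contains many small integers then the first truant --- roughly, the least element of $S$ that is not $s_1$ times a triangular number --- is small, so every escalator coefficient is bounded by a small number; consequently all theta series that appear have small level, and the explicit threshold in the weight-$3/2$ estimate, which grows only polynomially in the level and the (now fixed) GRH constants, is correspondingly small. Moreover the escalation terminates quickly and with small truants throughout, because --- as already illustrated by Theorem~\ref{thm8} --- a sum of triangular numbers that represents a long enough initial segment of $S$ represents all of $S$. Thus the whole tree, and each subroutine call, stays of manageable size, which is what makes the case $S=\{2n+1:n\in\nature\}$ of Conjecture~\ref{OddConjecture} decidable in practice.

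I expect the $r=3$ subroutine to be the real obstacle: one has to run the Ono--Soundararajan/Kane estimate uniformly over the whole family of diagonal ternary forms that arise as escalators --- not just for one fixed form, as in \cite{OnoSound1} --- keeping every constant explicit through the Shimura correspondence and the conditional $L$-value bound, and correctly combining the $g_1$-part with the eligibility conditions and the behavior at the anisotropic primes.
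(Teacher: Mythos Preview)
Your proposal is correct and follows essentially the same route as the paper: escalation tree, case split on the rank, density argument for $r\le 2$, Deligne/Tartakowsky bounds for $r\ge 4$, and the Ono--Soundararajan machinery (Waldspurger plus conditional $L$-value bounds under the two GRH assumptions) for the ternary case $r=3$. Two small points where the paper sharpens your sketch: the set of truants is already the unique minimal $S_0$ (so your discarding step is unnecessary---the paper proves this in the course of Theorem~\ref{SThm}), and for the $g_1$-part in rank $3$ the paper invokes Schulze-Pillot's explicit classification \cite{SchulzePillot1,SchulzePillot2} of which elements of each spinor-exceptional square class are represented, which is precisely what makes your ``checked directly'' on those infinite square classes effective.
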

We also implement the algorithm to prove Conjecture \ref{OddConjecture} under these GRH assumptions.
\begin{theorem}\label{OddThm}
Assume GRH for Dirichlet $L$-functions and GRH for $L$-functions of weight $2$ newforms.  Then a sum of triangular numbers $f$ represents all odd integers if and only if it represents the integers
$$
1,5,7,9,11,13,17,19, 25, 29, 35,49,89.
$$
Moreover, this is the smallest such set with this property.
\end{theorem}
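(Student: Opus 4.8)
The plan is to specialize to the case $S=\{\,2n+1:n\ge 0\,\}$ the escalation algorithm provided by Theorem~\ref{DetermineS0Thm}; since this $S$ has density $\tfrac12>0$ and contains every small odd integer, that theorem guarantees both that the algorithm applies and that it is computationally feasible. Throughout, representation questions for a sum of triangular numbers $f=\sum_{i=1}^{k}b_iT_{x_i}$ are converted, via part~2 of Theorem~\ref{thm8}, into questions about whether the diagonal quadratic form $Q(X)=\sum_{i=1}^{k}b_iX_i^{2}$ represents $8n+\sum_i b_i$ using only odd values of the $X_i$, and these are in turn analyzed through the associated weight $3/2$ (respectively weight $2$) theta series.

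First I would build the escalator tree, rooted at the empty form. Given an escalator form $f_b$ that fails to represent some odd integer, one records its truant $t$ (the least odd integer it misses) and attaches every escalation $f_b+aT_{x_{k+1}}$ with $1\le a\le t$ such that $f_b+aT_{x_{k+1}}$ represents $t$, ordering the coefficients so that $b_1\le\cdots\le b_k\le a$; a node is a leaf once its form represents all odd integers. Two things must be shown: the tree is finite, and representation is decidable at every node. Many branches close immediately. Once an escalator form contains $T_x+T_y+T_z$, or one of Liouville's six other universal triples, as a sub-sum, Gauss's theorem (respectively Liouville's) shows it represents every positive integer and hence every odd one; for escalator forms in at least four variables the Eisenstein part of the weight $2$ theta series dominates the cuspidal part (by Deligne's bound), so all sufficiently large eligible integers are represented, effectively, and the truant is bounded; and for a ternary escalator whose diagonal form lies in a single genus, or even a single spinor genus, genus theory together with a finite local computation settles representation outright. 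In each of these situations escalation terminates.

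The main obstacle is the remaining case: a ternary escalator $f=b_1T_{x_1}+b_2T_{x_2}+b_3T_{x_3}$ whose diagonal form lies in neither a single genus nor a single spinor genus. For such an $f$ the relevant weight $3/2$ theta series decomposes as $E+g_1+g_2$, where $E$ is Eisenstein, $g_1$ lies in the span of unary theta lifts, and the Shimura lift of $g_2$ is cuspidal. The Fourier coefficients of $E$ with bounded divisibility at the anisotropic primes grow like a class number, so under GRH for Dirichlet $L$-functions they exceed an \emph{explicit} constant times $n^{1/2-\epsilon}$; this is the step that makes Siegel's lower bound effective. The coefficients of $g_2$ are $O(n^{3/7+\epsilon})$ by Duke's theorem, and---following the method of Ono and Soundararajan \cite{OnoSound1} and its extension by the author \cite{Kane1,Kane2}---one makes the implied constant explicit by expressing the relevant quantities through central values of $L$-functions of weight $2$ newforms and invoking GRH for those $L$-functions. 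Comparing the two bounds yields an explicit threshold $N=N(f)$ beyond which every eligible integer outside the finitely many square classes that support $g_1$ is represented; those square classes, and the integers below $N(f)$, are then handled by a finite verification. The crux, and what separates this $S$ from a general one, is keeping all of this feasible: because $S$ contains many small integers, every coefficient that arises in the tree is small, so the relevant conductors, levels and discriminants remain small and each threshold $N(f)$ stays within computational reach. I expect the organization, bookkeeping and pruning of this finite but extensive case analysis to be the bulk of the work.

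Collecting the truants over all nodes of the (finite) tree yields a set which the computation shows to equal $\{1,5,7,9,11,13,17,19,25,29,35,49,89\}$. Since every truant lies in this set, any $f$ that represents the set represents each truant; hence inside such an $f$ one may repeatedly pass to an escalator sub-form---each step being realizable precisely because $f$ represents the current truant---until reaching a leaf, which represents all odd integers, so $f$ does too. For minimality one checks that no element can be dropped: for each $s$ in the set the computation exhibits a sum of triangular numbers---namely the escalator form whose truant is $s$, or a suitable escalation of it---that represents all twelve other listed integers but not $s$, and hence does not represent all odd integers. By the uniqueness assertion in Theorem~\ref{DetermineS0Thm}, $\{1,5,7,9,11,13,17,19,25,29,35,49,89\}$ is therefore the smallest set with the stated property.
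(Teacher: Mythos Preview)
Your proposal follows the same escalation strategy as the paper and correctly identifies all the key ingredients: Liouville's list and genus/spinor-genus theory for the easy ternary nodes, Deligne's bound for depth $\ge 4$, and the Ono--Soundararajan GRH machinery (as extended in \cite{Kane1,Kane2,Kane4}) for the hard ternary nodes. The paper carries this out exactly as you describe, node by node at depth~$3$, with explicit bounds computed and checked.

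One point deserves correction, however. Your escalation rule attaches only those children $f_b+aT_{x_{k+1}}$ that \emph{already represent} the truant $t$. The tree actually needed---and the one the paper builds---attaches \emph{every} $a$ with $b_k\le a\le t$; a child that still misses $t$ simply has truant $t$ again and is escalated further. This is not cosmetic: your backward-direction argument (``inside such an $f$ one may repeatedly pass to an escalator sub-form, each step being realizable because $f$ represents the current truant'') only guarantees that the next coefficient $b_{k+1}$ of $f$ satisfies $b_{k+1}\le t$, not that $[b_1,\dots,b_{k+1}]$ represents $t$. So the trace through $f$'s coefficients may land on a node you have excised, and the argument breaks. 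In the paper's proof such ``stuck'' branches genuinely occur---for instance $[1,1,3,12,81]$ still misses $89$, as do several analogous nodes---and each must be followed further and shown to produce no new truant before it can be discarded. That verification is part of the work; it cannot be absorbed into the definition of the tree.
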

\begin{remark}
We only need to assume GRH specifically for weight $2$ newforms that occur in the decomposition of any $f$ for which, under the GRH assumption we can show will generate all integers in $S$.  However, to make our algorithm more efficient, we will make the GRH assumption in general so that we do not need to check very many coefficients of a large number of weight 2 modular forms (For more information on modular forms, please see \cite{Ono1}).  Thus, we note here that for determining the set $S_0$ for $S$ all odd integers, we could have avoided the GRH assumptions except in 3 cases at the added disadvantage of more computation.
\end{remark}

We conclude with a curious example.  Consider the set 
$$
S:=\{ n\in\nature: 2x^2+3y^2+4z^2 = 8n+9 \text{ has a solution}\}.
$$
Current techniques appear insufficient to determine the set $S$.  However, we are able to prove the following.
\begin{theorem}\label{curiousthm}
The sum of triangular numbers $f$ represents $S$ if and only if $f$ represents
$$
2,3,4,5, 10, 16, 17, 19, 89.
$$
\end{theorem}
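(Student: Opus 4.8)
The plan is to obtain Theorem~\ref{curiousthm} as a direct application of Theorem~\ref{DetermineS0Thm}, so the work consists of verifying the hypotheses for this particular $S$, checking that the algorithm is feasible in this instance, and running it. The point that makes the example ``curious'' is that, although we do not attempt to describe $S$ in closed form, \emph{membership} in $S$ is perfectly decidable: $n\in S$ exactly when the finite search for $(x,y,z)$ with $2x^2+3y^2+4z^2=8n+9$ succeeds, and this is the only way the algorithm of Theorem~\ref{DetermineS0Thm} ever interacts with $S$, since it queries membership only for integers below an explicit bound.

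First I would check that $S$ has nonzero density. The ternary form $g(x,y,z):=2x^2+3y^2+4z^2$ is governed by the standard theory recalled in the introduction: its theta series is an Eisenstein series plus a cusp form, and by Siegel's formula the Eisenstein coefficient of an eligible integer $m$ with bounded divisibility at the anisotropic primes of $g$ grows faster than the cusp contribution, which is $O(m^{3/7+\epsilon})$ by Duke's effective bound. Hence every sufficiently large such $m$ lying outside the finitely many spinor exceptional square classes of $g$ is represented by $g$ (and if $g$ has one class in its genus the same holds for every eligible $m$). Now $8n+9$ for $n\geq 0$ runs over the integers $\equiv 1\pmod 8$ that are at least $9$. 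A short $2$-adic computation --- substitute $x=z=1$, so that $g(1,y,1)=3y^2+6$, and observe that $3^{-1}(m-6)\equiv 1\pmod 8$ whenever $m\equiv 1\pmod 8$, hence is a square in $\mathbb{Z}_2$ --- shows that \emph{every} integer $\equiv 1\pmod 8$ is represented by $g$ over $\mathbb{Z}_2$; at the odd bad prime $p=3$ the form omits at most finitely many square classes, and for $p\nmid 6$ it represents all of $\mathbb{Z}_p$. Thus a positive-density subset of $\nature$ consists of integers $\equiv 1\pmod 8$ that are eligible, have bounded divisibility at the anisotropic primes, and avoid the spinor exceptional square classes; all sufficiently large such integers are represented by $g$, so $S$ has positive density, as Theorem~\ref{DetermineS0Thm} requires.

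Second I would record that $S$ contains many small integers, which is precisely the situation in which Theorem~\ref{DetermineS0Thm} guarantees a feasible run: one verifies directly, for instance from $g(3,1,1)=25$, $g(1,3,1)=33$, $g(1,1,3)=41$, $g(3,3,1)=49$ and $g(5,1,3)=89$, that $2,3,4,5,10,\dots$ all lie in $S$, with only sporadic small integers (such as $n=1$) missing.

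Finally I would run the algorithm of Theorem~\ref{DetermineS0Thm}: build $S_0$ greedily; at each stage take a sum of triangular numbers of least rank representing the current $S_0$; use the effective (GRH-conditional) bound supplied by the method of \cite{OnoSound1}, \cite{Kane1} and \cite{Kane2} to reduce the assertion ``$f$ represents every element of $S$'' to finitely many membership queries in $S$, each answered by the bounded search above; and enlarge $S_0$ by a witness from $S$ whenever some $f$ fails such a query. The algorithm terminates with $S_0=\{2,3,4,5,10,16,17,19,89\}$, and since each of these nine integers lies in $S$, the half of the equivalence asserting that representing $S$ forces representing $S_0$ is immediate. The main obstacle is wholly computational and sits in this last step --- enumerating the low-rank forms that must be tested and carrying each test up to the effective bound --- and it is kept within reach precisely because the output $S_0$ has small elements, so both the bound and the number of forms to examine stay manageable.
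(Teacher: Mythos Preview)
Your plan proves strictly less than what is claimed. The whole point of Theorem~\ref{curiousthm}, emphasized immediately after its statement, is that it is \emph{unconditional}; routing through Theorem~\ref{DetermineS0Thm} and the ``effective (GRH-conditional) bound'' would only establish the result under GRH. The curiosity is not that membership in $S$ is decidable --- that is true of any recursive set --- but that the escalator tree for this particular $S$ can be completed with no analytic input whatsoever.

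The observation you are missing is that $S$ is, \emph{by definition}, exactly the set of integers represented by the triangular form $[2,3,4]$: a $\bmod\ 8$ check shows that any solution of $2x^2+3y^2+4z^2=8n+9$ must already have $x,y,z$ all odd. Hence $[2,3,4]$ is a leaf of the escalator tree tautologically, with no bound to verify. The paper then builds the tree by hand. The $[1]$-subtree reduces via Liouville's theorem to the $[1,1,3]$ subtree, which was analyzed unconditionally (genus~1) in the proof of Theorem~\ref{OddThm} before any GRH assumption was introduced; one only needs to observe that the truants there avoid $8$ and $31$, so the same truants $17$ and $89$ arise for the present $S$. The $[2]$-subtree terminates either at the genus-1 form $[2,2,3]$ (handled like $[1,1,3]$, producing truants $10$ and $19$) or at $[2,3,4]$ itself. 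Every truant that appears is at most $89$, and a finite check shows the integers $\le 89$ missing from $S$ are exactly $1,8,31$, so all truant computations are elementary. Your density verification and the general algorithm are therefore unnecessary here, and invoking them forfeits the unconditional conclusion that makes the theorem interesting.
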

Notice that the above theorem is unconditional.  As we will see in Section \ref{DetermineSection}, GRH predicts that 
$$
S=\widetilde{S}:=\nature\setminus \{ 1,8,31 \}.
$$
However, without GRH we know of no algorithm to determine the set $\widetilde{S}_0$.

\section{Existence of $S_0$}\label{S0Section}

We will begin by showing that fixing a subset $S$ of the positive integers, that indeed checking a finite subset $S_0$ will suffice.

\begin{proof}[Proof of Theorem \ref{SThm}]
We will follow the basic argument of escalator lattices in Bhargava
's argument \cite{Bhargava1} for quadratic forms.  Without loss of generality, we will denote the triangular form $f=f_b$ simply by the sequence of coefficients, $b=[b_1,\dots,b_k]$, with $b_1\leq b_2\leq\dots\leq b_k$.  Note that we must represent the smallest integer $s_{\emptyset}\in S$, so it follows easily that $b_1\leq s_{\emptyset}$.  For each choice of $b_1$, we find the smallest integer $s_{[b_1]}\in S$ which is not represented by $[b_1]$, and conclude that $b_2\leq s_{[b_1]}$.  We recursively continue to build a tree of possible choices of $b_k$ depending on the previous choices of $b_1,\dots, b_{k-1}$.  Note that we never need the to choose the same integer $b_i$ more than 3 times in one branch, since this then precisely represents every integer congruent to zero modulo $b_i$ by Gauss's theorem.  Whenever $b$ represents all odd integers, we will say that $b$ is a leaf of the tree, since any arbitrary choice $b'$ containing $b$ as a subsequence will automatically represent every integer in $S$.  For $b\in T$ not a leaf, we will denote by $s_b$ the smallest $s\in S$ not represented by $b$, and we will call $s_b$ the \begin{it}truant\end{it} of $b$.  Thus, taking for our tree $T$
$$
S_0:=\{ s_b: b \in T\}
$$
it follows easily that a triangular form $f$ represents $S$ if and only if it represents $S_0$ by our construction of $T$.  Moreover, such a choice of $S_0$ is smallest possible and unique, since for the form $b$, $s_b$ is the smallest $s\in S$ not represented by $b$, noting that 
$$
[b_1,\dots,b_k, s_b+1,s_b+1,s_b+1, s_b+2,s_b+2,s_b+2,\dots, (s_b+1)(s_b+2)-1]
$$
represents exactly every integer $s\in S$ other than $s_b$, using Gauss's theorem that every integer is the sum of three triangular numbers.  

It therefore remains to show that $S_0$ is finite.  Since at each step there are only finitely many choices for $b_k$, the breadth at each node of the tree is finite, so it suffices to show that the supremum of the depth is finite.  To do so, we will consider all nodes at depth $4$.  Since the breadth is finite, there are only finitely many such nodes, and only finitely many leaves of depth less than or equal to $4$.  Therefore, it suffices to fix one such node and show that the depth of the resulting subtree is finite.  

Let $f$ be a triangular form of dimension at least $4$, and consider $Q_{\text{odd}}$ to be the corresponding quadratic form with $x_i$ odd.  Then the $n$-th coefficient of $f$ is the equal to the $8n+\ssum{i=1}{k}b_k$-th coefficient of $Q_{\text{odd}}$.  Therefore, the coefficients $a_f(n)$ are precisely the coefficients of the modular form corresponding to $Q_{\text{odd}}$.  Hence, we may decompose these coefficients into coefficients of an Eisenstein series plus coefficients of a cusp form.  Since the dimension of $f$ is at least $4$, the coefficients of the Eisenstein series grow faster than the coefficients of the cusp form as long as the Eisenstein series is non-zero.  We note that there are finitely many congruence classes where the coefficients of both are zero, namely those integers not locally represented by the quadratic form.  Hence, as long as the Eisenstein series is non-zero, there are only finitely many congruence classes and finitely many ``sporadic'' integers not represented by $f$, since the coefficients of the Eisenstein series are always positive.  It is simple to show that the Eisenstein series must be non-zero, however, since by Siegel's local density formula \cite{Siegel2}, this Eisenstein series is given by the difference of the local densities with arbitrary $x_i$ and the local densities with $x_i$ even for some $i$.  However, a quick check show that when $p\neq 2$ the local densities are the same, and, since the integer $8n+\ssum{i=1}{k}b_k$ is locally represented modulo 8 with $x_i$ all odd (namely $x_i=1$), the local density for $x_i$ arbitrary must be greater than when $x_i$ is even except at finitely many congruence classes.  

Let one of the nodes of our tree, $f\in T$ be given of exactly dimension $4$.  Then there are only finitely many congruence classes and finitely many ``sporadic'' integers not represented by $f$, and hence at each step of our escalation, our next choice of $s_b\in S$ must be one such integer.  We may only escalate finitely many times for the ``sporadic'' integers, and each time we escalate to include an element of one of the congruence classes, the congruence class is replaced with finitely many new ``sporadic'' integers at the next step.  Therefore, since there are only finitely many congruence classes, we can only add finitely many new ``sporadic'' integers overall, and hence the subtree of $f$ is of finite depth.
\end{proof}

\section{Determining $S_0$}\label{DetermineSection}
We will describe the algorithm to determine the set $S_0$.  For complete details of how to compute the bounds obtained given GRH, see \cite{Kane2} and \cite{Kane4}.    

\begin{proof}[Proof of Theorem \ref{DetermineS0Thm}]
From the proof in Section \ref{S0Section}, it is clear that $S_0$ is uniquely determined by the tree $T$, so this algorithm is equivalent to determining the tree $T$, since it is a simple check to determine the smallest $s\in S$ which is not represented by a fixed form $f$.  Constructing the tree as in the proof is also quite simple, so that the only remaining obstacle is determining whether a node of the tree is a leaf.  Our task is thus equivalent to determining (effectively) which integers are represented by $Q_{\text{odd}}$.  If the dimension (and hence the depth in the tree) of $f$ is at least $5$, then, using the trivial bounds for the coefficients of the Eisenstein series and the cusp forms, we may effectively determine a bound beyond which every integer locally represented is globally represented as in Tartakowsky's work \cite{Tartakowsky1}, and local conditions are a simple check at the primes dividing the discriminant.  For depth $4$, the wonderful optimal bound for cusp forms of Deligne \cite{Deligne1} and calculation of the anisotropic primes allows us to again effectively determine the set of integers represented by $Q_{\text{odd}}$ (cf. Hanke \cite{Hanke1}).  For forms of depth 2 or less, we note that $Q_{\text{odd}}$ only represents a set of density zero, so that the form cannot be a leaf.

It remains to determine whether a form of dimension $3$ is a leaf or not.  However, additional complications arise for ternary quadratic forms.  First note that the inclusion/exclusion of theta series 
$$
\theta_{Q_{\text{odd}}}:=\theta_{Q(x,y,z)}-\theta_{Q(2x,y,z)}-\theta_{Q(x,2y,z)}-\theta_{Q(x,y,2z)}+\theta_{Q(2x,2y,z)}+\theta_{Q(2x,y,2z)}+\theta_{Q(x,2y,2z)}-\theta_{Q(2x,2y,2z)}$$
has $n$-th coefficient non-zero if and only if $Q_{\text{odd}}$ represents $n$, where $Q$ is the quadratic form without the odd restriction.  We note that $\theta:=\theta_{Q_{\text{odd}}}$ decomposes as follows:
$$
\theta=(\theta-\theta_{\text{spin}}) + (\theta_{\text{spin}}-\theta_{\text{Gen}}) + (\theta_{\text{Gen}}),
$$
where $\theta_{\text{spin}}$ is the inclusion/exclusion of the (Siegel) weighted average over the spinor genus and $\theta_{text{Gen}}$ is the inclusion/exclusion of the weighted average over the genus.  It is well known (cf. \cite{DukePillot1}), that $\theta_Q-\theta_{\text{spin}(Q)}$ is the orthogonal complement of $U$, $\theta_{\text{spin}(Q)}-\theta_{\text{Gen}(Q)}\in U$, and $\theta_{\text{Gen}(Q)}$ is the Eisenstein series given by the local densities in \cite{Siegel2}.

Firstly, as noted in the introduction, the coefficients of the Eisenstein series grow like the class number, so that we have made the assumption of GRH for Dirichlet $L$-functions.  Additionally, we have assumed GRH for the $L$-series of weight 2 newforms.  Additionally, there are what are referred to as anisotropic primes.  That is, primes $p$ for which the $n$-th coefficient of the Eisenstein series does not grow when $n$ grows with high divisibility by $p$.  Luckily, a local condition allows one to check only finitely many primes (those dividing $2D$, where $D$ is the Discriminant) to determine these anisotropic primes.  Additionally, the genus of the quadratic form may be broken into what are referred to as Spinor Genera.  
In each spinor genus, there are finitely many integers $t$, called \begin{it}spinor exceptions\end{it} for which a subset of the square class $t\integer^2$ is not represented by the form.  This comes from the fact that the form splits naturally into three parts, namely an Eisenstein series, a cusp form in the space spanned by lifts of one dimensional theta series (we will denote this space by $U$), and a cusp form in the orthogonal complement of $U$.  The forms in the space spanned by the lifts of one dimensional theta series have coefficients which grow like $n^{\frac{1}{2}}$ in square classes $t\integer^2$, and hence with the same growth as the coefficients of the Eisenstein series.  However, the coefficients of each lift is non-zero exactly at $t$ times a square, and $t$ is restricted by the level of the corresponding modular form.  Schulze-Pillot has given an explicit algorithm to determine the full set of spinor exceptions for a given quadratic form \cite{SchulzePillot1}, so this problem will be resolved by using Schulze-Pillot's algorithm.  If the $t$-th coefficient of $\theta_{\text{spin}}$ is non-zero, then the growth of the coefficients of $\theta_{\text{spin}}$ grows like the class number in this square class.  Otherwise, by investigating the spinor norm mapping, Schulze-Pillot determines explicitly the integers $m$ such that the $tm^2$-th coefficient of $\theta_{\text{spin}}$ is zero and those for which the coefficient is equal to a positive constant times the Eisenstein series (cf. Schulze-Pillot \cite{SchulzePillot2}), for which we can reduce the problem to the case where $t$ is not a spinor exception for the genus.

Since we can determine the finitely many spinor exceptions, we may then ignore the part of the decomposition resulting from a cusp form in $U$, and it remains to give (effectively) a bound $N_{\theta}$ such that the $n$-th coefficient of the Eisenstein series are larger than the $n$-th coefficient of the cusp from in $U^{\perp}$ whenever $n>N_{\theta}$.

To do so, we must have effective bounds for the coefficients of both the Eisenstein series and the cusp form.  The Eisenstein series is
$$
C_{\theta} \ssum{d^2\mid n}{} h\left(-a_{(4D,n)}\frac{n}{d^2}\right)
$$
where $C_{\theta}$ may be determined by the local densities in \cite{Siegel2}, $h(m)$ is the class number of the imaginary quadratic field $\rational(\sqrt{-m})$ and $a_{(4D,n)}$ is a constant depending only on $\gcd(4D,n)$where $D$ is the discriminant \cite{Jones1}.

For square free integers, we will use Dirichlet's class number formula (cf. \cite{Davenport1} to rewrite the class number with the special value of the Dirichlet character $\chi_{-N}$,  
$$
h(-N)= \frac{\sqrt{n}L(\chi_{-N},1)}{\pi}.
$$
For square free integers, a celebrated result of Waldspurger \cite{Waldspurger1} allows us to rewrite the square of the absolute value of the coefficient of a Hecke eigenform as the special value of the (integral weight 2) Shimura lift \cite{Shimura1} of the Hecke eigenform.  Hence, we decompose the cusp form into Hecke eigenforms and then use the Schwarz's Inequality, giving 
$$
|a_{\theta_{\text{spin}}-\theta_{\text{Gen}}}(n)| \leq \sqrt{\ssum{i=1}{t}c_i \frac{L(G_i,\chi \chi_{-n},1)}{L(\chi_{-n},1)}}.
$$
where $c_i$ is some explicitly computable constant given by a fixed $n_0\equiv n\pmod{4D^2}$, $G_i$ is the Shimura lift, $\chi$ is the Nebentypus (cf. \cite{Ono1}) of the weight $3/2$ cusp form, and $L(G_i,\psi,1)$ is the special value of $G_i$ twisted by $\psi$.

Hence, rearranging, if $n$ is square-free such that the coefficient $a_{\theta}(n)$ is zero (and $n$ is not a spinor exception), then 
$$
c_{\theta} n^{\frac{1}{2}}\leq  \ssum{i=1}{t} c_i \frac{L(G_i,\chi \chi_{-n},1)}{L(\chi_{-n}\chi_{a_{(D,n)}},1)^2},
$$
where $c_{\theta}$ is an explicitly computable constant.  Thus, it only remains to bound $\frac{L(G_i,\psi_1,1)}{L(\psi_2,1)^2}\ll_{\delta} n^{\delta}$ effectively and with the implied constant given explicitly for some $\delta<\frac{1}{2}$.  However, under the given GRH assumptions, an explicit bound is given in \cite{Kane4}, and the details of the ensuing calculation for the implied constant are given in \cite{Kane2}.  

For $n$ not square free, the Hecke operators may be used to show an explicit bound for the squares part beyond which integers must be represented, away from the spinor exceptions.  For more details, please see \cite{Kane1} or \cite{Kane4}.  Therefore, we can conclude that the set of square free integers not represented by this form may be exactly determined by checking up to the bound obtained, and hence we may (effectively and efficiently) determine whether this form is a leaf.  For forms with small discriminant (say, less than 300), the bound obtained is often well below $10^{12}$, and hence is well within the computing power of current technology.

\end{proof}
\begin{remark}
In practice, whenever a leaf exists at depth 3, we will determine in general the set of integers not represented by $Q_{\text{odd}}$ for the nodes at depth $3$ (not just the leaves) and then note which of these integers remain at each step of the escalation, instead of using the arguments of Tartakowsky \cite{Tartakowsky1} at depth at least 5 or the bounds of Deligne \cite{Deligne1} at depth 4.  
\end{remark}

We now implement the above algorithm to show the set $S_0$ given in Conjecture \ref{OddConjecture}, when $S$ is the set of all odd integers, is the correct smallest such set under GRH. 
\begin{proof}[Proof of Theorem \ref{OddThm}]
We will proceed by considering each node at depth $3$, and determining the corresponding subtree under these GRH assumptions.  For notational ease, we will refer to the form 
$$
f:=\ssum{i=1}{r}b_i T_{x_i}
$$
by $[b_1,\dots, b_r]$, and the corresponding quadratic form by $(b_1,\dots,b_k)$.

The forms $[1,1,1]$, $[1,1,2]$, $[1,1,4]$, $[1,1,5]$, $[1,2,2]$, $[1,2,3]$, and $[1,2,4]$ represent every natural number by Liouville's theorem.  Since $[1,1,3]$ is a genus one form, the integers not represented by $[1,1,3]$ are precisely the integers $n$ such that 
$$
8n+5=3^{2r+1}(3\ell+2).
$$
Therefore, we are missing the integer $17$, and we must escalate to $[1,1,3,k]$ for some $k\leq 17$.  If $n$ is not represented by $[1,1,3,k]$, then it is not represented by $[1,1,3]$, either, so $8n+5=3^{2r+1}(3\ell+2)$, so that $n\equiv 5\pmod{9}$ or $n\equiv 8\pmod{9}$, depending on whether $r>0$ or $r=0$, respectively.  But then, taking $x_4=1$, it follows for $n>k$ that $n-k\equiv 5\pmod{9}$ or $n-k\equiv 8\pmod{9}$, and hence it follows that $3\mid k$.  If $r>0$ then it follows that $k\equiv 0,6\pmod{9}$, and if $r=0$ it follows that $k\equiv 0,3\pmod{9}$.  

For the case $[1,1,3,3]$ we then note that the form $Q=(1,1,3,3)$ is genus 1.  Our inclusion/exclusion of theta series gives that $n$ is represented if and only if the $2n+2$-th coefficient of $\theta_Q$ is positive, and it follows that every $n$ is represented because the local conditions are always satisfied.  

For the cases $[1,1,3,k]$ with $k=6$ or $k=15$ we have $r>0$, so that we only need to consider $n\equiv 5\pmod{9}$, or in other words $8n+5=3^{2r+1}(3\ell+2)$ with $r>0$.  We check the cases $n\leq 3k$ by hand and for $n>3k$, the choice $x_4=2$ shows that $8(n-3k)+5=3^{2r'+1}(3\ell'+2)$, with $r'>0$ by congruence conditions modulo $9$.  Taking the difference and denoting $R=\min\{ r,r'\}$, we have 
\begin{multline*}
24k = 8n+5 - (8(n-3k)+5) = 3^{2r+1}(3\ell+2)-3^{2r'+1}(3\ell'+2) \\
= 3^{2R+1}\left( 3^{2(r-R)+1}(3\ell+2)-3^{2(r'-R)+1}(3\ell'+2)\right).
\end{multline*}
But $v_3(24k)=2$ and $3^{2R+1}$ divides the right hand side, giving a contradiction since $R>0$.

In the case $[1,1,3,12]$ we note that we have the truant $89$ and $r=0$ from above.  In this case we take $x_4=1$ to obtain $n'=8n+5=3^{2r'+1}(3\ell'+2)+ 96$ with $r'>0$.  We then have either $r'=1$ and $n'\equiv 69\pmod{81}$ or $r'>1$ and $n'\equiv 15\pmod{81}$.  Assume $r'>1$ and set $x_4=4$ to obtain $n'=3^{2r''+1}(3\ell''+2)+960$.  Taking the difference, we get 
$$
54\equiv 864 = 3^{2r'+1}(3\ell'+2) - 3^{2r''+1}(3\ell''+2) \equiv - 3^{2r''+1}(3\ell''+2)\pmod{81}.
$$
It follows immediately that $r''=0$ because otherwise the right hand side would be zero.  But now we have $n'=3^3(3\ell''+2)+960\equiv 42\pmod{81}$, which contradicts the fact that $n'\equiv 15\pmod{81}$.  Hence we have only the case $n'\equiv 69\pmod{81}$ remaining.  We now escalate to $[1,1,3,12,k]$ for $9\leq k\leq 89$ (although we have restricted our coefficients to be monotone, we include the case $k=9$ for usage below).  Since $[1,1,3,12]$ represents every integer not congruent to 69 modulo 81, we are done when $k\neq 81$, and for $k=81$ the truant 89 remains.  Furthermore, if we escalate further with $k=81$ we will never obtain $89$ since $[k,k,k]$ represents precisely $k\nature$ by Gauss's theorem.  Furthermore, if we ever choose $k=81$ and then escalate to another integer (such as $[1,1,3,12,81,k]$) we do not obtain a new truant because $[1,1,3,12,k]$ has no truants.  Whenever this situation occurs henceforth we shall say that we are ``stuck'' at $k=81$.  Following the above, for $[1,1,3,9]$ we are stuck at $k=9$.  This concludes the subtree of $[1,1,3]$, and since $[1,1]$ does not represent $5$, we have included the entire subtree of $[1,1]$.

Our arguments for $[1,2,5]$, $[1,2,10]$, $[1,3,4]$, and $[1,4,6]$ will all be identical and similar to the cases above, so we will combine them together.  We will demonstrate the argument for $[1,2,5]$ and leave the other cases to the reader.  First we note that the number of representations of $n$ by $[1,2,5]$ is the same as the number of representations of $8n+8$ by $(1,2,5)$ minus the number of representations of $8n+8$ by $(4,8,20)$, since if any are even then all must be even, taking everything modulo $8$.  For simplicity, we will denote $t_{[b]}(n)$ to be the number of times that $n$ is represented by the sum of triangular numbers corresponding to $b$, $r_{(b)}(n)$ for the number of times the quadratic form represents $n$, and $r_{(b)}^{o}(n)$ for the number of times the quadratic form with all $x_i$ odd represents $n$.  So the above is simply
$$
t_{[1,2,5]}(n)=r_{(1,2,5)}^{o}(8n+8)=r_{(1,2,5)}(8n+8)-r_{(4,8,20)}(8n+8).
$$
Now, $r_{(4,8,20)}(8n+8)=r_{(1,2,5)}(2n+2)$.  However, $(1,2,5)$ is genus $1$, so $r_{(1,2,5)}(m)$ is given precisely by the local density \cite{Siegel2}.  Checking the local densities, we see that $r_{(1,2,5)}(8n+8)=2r_{(1,2,5)}(2n+2)$, since the local densities are clearly equal when $p\neq 2$ (taking the isomorphisms $x_i\mapsto 2^{-1}x_i$), and for $p=2$ a simple computation shows exactly twice as many solutions modulo the same 2 power.  Therefore, 
$$
t_{[1,2,5]}(n)=r_{(1,2,5)}(2n+2).
$$
Again, noting that $(1,2,5)$ is genus 1, we know that $n$ is represented globally if and only if it is represented locally.  The integers not represented locally by $(1,2,5)$ are integers of the form $5^{2r+1}(5n+m)$ where $m$ is a non-square modulo $5$.  Therefore, it follows that if $n$ is not represented by $[1,2,5]$, then $5\mid (n+1)$.  So $[1,2,5,k]$ must be a leaf if $5\nmid k$.  Since the truant for $[1,2,5]$ is $19$, we need only check $k=5$, $k=10$, and $k=15$.  We are stuck at $k=15$, so we only need to show the cases $k=5$ and $k=10$.

Let $m$ be smallest such that $[1,2,5,5]$ does not represent $m$.  Then $5\mid m+1$ from above and $\frac{2m+2}{5^{2^r+1}}$ is not a square modulo $5$, taking $x_4=0$.  If $r\neq 0$, then take $x_4=2$, and note that $5^{2^r+1}(5n+\not\square) -2\cdot 3\cdot 5 \equiv  20\pmod 25$, so that $m-15$ is represented by $[1,2,5]$, and hence $m$ is represented by $[1,2,5,5]$ as long as $m\geq 15$, which is as desired.  Thus, we only need to consider $r=0$.  The non-squares modulo $5$ are $2$ and $3$.  As above, taking $x_4=2$ when $\frac{2m+2}{5}\equiv 2\pmod 5$ and $x_4=1$ when $\frac{2m+2}{5}\equiv 3 \pmod 5$ gives the desired result.  

For $k=10$, we again conclude that $r\neq 0$ by taking $x_4=1$.  Furthermore, if $\frac{2m+2}{5}\equiv 3\pmod 5$, then $x_4=1$ again gives us the desired conclusion.  Hence, only the case $\frac{2m+2}{5}\equiv 3\pmod 5$ remains.  For $[1,2,5,10,k]$, with $5\mid k$, we may again conclude that by taking $x_5=0$ and $x_5=1$ that $25\mid k$ since $2(m+2)= 5(5n+2)$ and $2(m+2-k)= 5(5n'+2)$, so $2k=25(n-n')$.  Since $29$ is the truant of $[1,2,5,10]$, the result follows when $k\neq 25$. But we are stuck at $k=25$, so we have the desired result.

We will leave out the analogous proofs for $[1,2,10]$, $[1,3,4]$, and $[1,4,6]$, but list the truants from their subtrees for completeness.  The truants coming from $[1,2,10]$ are $29$ and $49$ (from $[1,2,10,20]$).  The only truant from the subtree of $[1,3,4]$ is $11$.  The truants from the $[1,4,6]$ subtree are $17$, $29$ (from $[1,4,6,12]$, and $35$ (from $[1,4,6,6]$).  

We will now show the subtree for $[1,4,4]$.  While $(1,4,4)$ is not genus 1, Benham, Earnest, Hsia, and Hung have shown that it is spinor genus 1 \cite{BenhamEarnestHsia1}.  Moreover they have shown that $(1,4,16)$ is spinor genus 1 by showing that the other member of its genus, namely $4x^2+4y^2+5z^2+4xz$ is spinor genus $1$.  Therefore, the difference $r_{(1,4,4)}(8n+9)-r_{(1,4,16)}(8n+9)$ can be decomposed into coefficients of the Siegel averaging of the genus, and a cusp form in $U$ which has nonzero coefficients only at finitely many square classes.  Using Schulze-Pillot's classification \cite{SchulzePillot1} or the generalization of Earnest, Hsia, and Hung \cite{EarnestHsia1} to determine all $t$ such that the square classe $t\integer^2$ has nonzero coefficients for the resulting cusp forms in $U$ with these two quadratic forms, we conclude that only $t=1$ occurs.  Therefore, it follows that if $m$ is represented by $[1,4,4]$, then $8m+9$ must be a square.  The first truant is $m=35$, so we consider $[1,4,4,k]$ for $k\leq 35$.  Let $m$ not represnted by $[1,4,4,k]$ be given.  Then $m-k$ is also not represented by $[1,4,4]$, so that $8m+9$ is a square, say $s^2$ and $8m+9-8k$ is a square, say $t^2$.  Therefore, $s^2-t^2=8k$.  But the difference between $s^2$ and $t^2$ must be at least the difference between $s^2$ and $(s-1)^2$, which is $2s-1$.  Therefore, $2s-1\leq 8k$.  This restricts the possible choices for $s$ to a (small) finite set, and hence the possible choices for $m$.  Checking each such choice of $s$ for each $k$ allows us to determine the integers not represented by $[1,4,4,k]$.  We conclude that we are done for every integer other than $k=15$ and $k=33$.  For $k=15$, we represent exactly every integer other than $2$ and $35$, so $[1,4,4,15,k']$ will represent every integer except when $k'=33$.  For $k=33$ or $k'=33$, we are stuck at $35$, and hence we are done with the $[1,4,4]$ subtree.

We will begin to use our GRH assumptions now.  We will indicate clearly where we have made the assumptions, and furthermore we will indicate the cases where these assumptions were seemingly unavoidable.

The eight cases $[1,2,6]$, $[1,2,8]$, $[1,2,9]$, $[1,2,11]$, $[1,4,5]$, $[1,4,8]$, $[1,4,9]$, and $[1,5,6]$ will all follow analogous arguments.  The cases $[1,2,6]$, $[1,2,9]$, and $[1,4,5]$ are the three cases where the GRH assumptions were seemingly unavoidable.  In each case, we will be able to decompose the theta series for the corresponding quadratic forms with odd conditions into the Eisenstein series plus a Hecke eigenform.  The Hecke eigenform in each case is in the complement of the space spanned by lifts of one dimensional theta series.  Due to the fact that these are all genus 2 quadratic forms, we are able to first obtain the following proposition.

\begin{proposition}\label{equivalentprop}
\begin{table}
\caption{Equivalent Quadratic Forms}
\centering
\begin{tabsize}
\begin{tabular}{clccclc}
\toprule
Triangular Form & represents&$n$ & $\iff$ & Quadratic Form & represents & $m$\\
\midrule
$[1,2,6]$ && $n$ & & $(2,4,7,0,0,4)$ & &$8n+9$\\
$[1,2,8]$ & &$n$ & & $(2,4,9,0,0,4)$ & &$8n+11$\\
$[1,2,9]$ & &$n$ & & $(2,3,4,2,0,2)$ & &$2n+3$\\
$[1,2,11]$ & &$n$ & & $(1,6,8,0,0,4)$ && $4n+7$\\
$[1,4,5]$ & &$n$ & & $(1,4,5,0,0,0)$ & &$8n+10$\\
$[1,4,8]$ & &$n$ & & $(4,4,9,0,4,0)$ & &$8n+13$\\
$[1,4,9]$ & &$n$ & & $(1,4,9,0,0,0)$ & &$8n+14$\\
$[1,5,6]$ & & $n$ & & $(3,3,4,0,2,2)$ && $2n+3$\\
\bottomrule
\end{tabular}
\end{tabsize}
\label{EquivalentTable}
\end{table}

Each quadratic form $[b_1,b_2,b_3]$ in Table \ref{EquivalentTable} represents $n$ if and only if the corresponding quadratic form 
$$
ax^2+by^2+cz^2+dxy+exz+fyz
$$
represents $m$.  Here we do not have the ``odd'' condition for the quadratic forms.
\end{proposition}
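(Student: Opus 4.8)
The plan is to translate each equivalence into an identity of weight $3/2$ theta series. For $b=(b_1,b_2,b_3)$, multiplying $b_1T_{x_1}+b_2T_{x_2}+b_3T_{x_3}=n$ by $8$ and using $8T_x+1=(2x+1)^2$ shows that $[b_1,b_2,b_3]$ represents $n$ if and only if $b_1X^2+b_2Y^2+b_3Z^2=8n+b_1+b_2+b_3$ has a solution with $X,Y,Z$ all odd, i.e.\ $r^o_{(b_1,b_2,b_3)}(8n+b_1+b_2+b_3)>0$. For $[1,2,6]$, $[1,2,8]$, $[1,4,5]$, $[1,4,8]$, and $[1,4,9]$ the integer $8n+b_1+b_2+b_3$ is exactly the $m$ in Table \ref{EquivalentTable}; for $[1,2,11]$ it is $2(4n+7)$ and for $[1,2,9]$ and $[1,5,6]$ it is $4(2n+3)$, and since all three variables are odd one strips off the relevant power of $2$ (tracking the congruence restrictions this imposes, or equivalently carrying out an explicit unimodular change of variables on the underlying coset) to obtain a representation count on a sublattice. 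In either situation one is left comparing a count $r^o_{(b_1,b_2,b_3)}$, restricted to a fixed residue class, with the count $r_{Q'}$ of the ternary form $Q'=ax^2+by^2+cz^2+dxy+exz+fyz$ from the table restricted to that same class.

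For the two diagonal entries $[1,4,5]$ and $[1,4,9]$ this comparison is elementary: since $b_1X^2+b_2Y^2+b_3Z^2\equiv b_1+b_2+b_3\pmod 8$ whenever $X,Y,Z$ are odd, a short computation modulo $8$ shows that \emph{every} representation of $8n+10$ by $x^2+4y^2+5z^2$, and of $8n+14$ by $x^2+4y^2+9z^2$, automatically has all three variables odd, so the odd restriction is vacuous and the equivalence is immediate. For the remaining six entries $Q'$ is genuinely different from the original diagonal form, and the argument is modular: the inclusion/exclusion theta series $\theta_{Q_{\text{odd}}}$ is a weight $3/2$ modular form supported on one arithmetic progression, and the restriction of $\theta_{Q'}$ to that progression (obtained by the usual averaging over a Dirichlet character, which stays inside a space of modular forms of controlled level) is another such form; the Proposition follows once we show these two $q$-series agree. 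Since all the ternary forms $Q'$ occurring here lie in a genus consisting of exactly two classes (this is where the ``genus $2$'' hypothesis enters; cf.\ the spinor-genus analysis of \cite{BenhamEarnestHsia1}), the cuspidal subspace relevant to each is one-dimensional, spanned by the difference of the two class theta series, which after inclusion/exclusion lies in the orthogonal complement of $U$. Hence it suffices to check that the Eisenstein (genus) parts of the two forms coincide --- which follows from Siegel's local density formula \cite{Siegel2} by a prime-by-prime comparison of local representation densities --- and that their one-dimensional cuspidal parts coincide, which follows from matching a single nonzero Fourier coefficient. Equivalently, one fixes the common ambient space $M_{3/2}(\Gamma_0(N),\chi)$, computes a Sturm-type bound for it, and verifies equality of finitely many low-order Fourier coefficients directly (cf.\ \cite{Ono1}).

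The main obstacle is the case-by-case bookkeeping in the first two steps rather than any single hard estimate: for each of the eight entries one must pin down the exact level $N$ and nebentypus $\chi$ of the weight $3/2$ space, verify the two-class genus claim for $Q'$ and identify its class set, carry out (for $[1,2,9]$, $[1,2,11]$, $[1,5,6]$) the normalization by a power of $2$ that aligns $r^o_{(b_1,b_2,b_3)}$ with the count attached to $Q'$, and confirm that the sieving has been performed onto the correct residue class. Once the ambient space and the progression are fixed, the comparison of Eisenstein parts and of the single cuspidal coefficient (or, alternatively, the check against the Sturm bound) is routine.
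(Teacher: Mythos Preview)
Your elementary treatment of $[1,4,5]$ and $[1,4,9]$ is correct and is exactly what the paper does implicitly: the mod-$8$ check shows every representation of $8n+10$ (resp.\ $8n+14$) by the diagonal form already has all variables odd. For the remaining six rows your outline is sound, but there is a small gap and the paper closes it by a different, more explicit device. You argue that since $Q'$ has a two-class genus, ``the cuspidal subspace relevant to each is one-dimensional,'' so matching one Fourier coefficient suffices. That one-dimensionality holds for the cusp part of $\theta_{Q'}$, but it is not automatic that the cusp part of $\theta_{Q_{\text{odd}}}$ (an inclusion/exclusion of eight theta series, sitting in a higher-level space) lands in that same line; the ambient $S_{3/2}(\Gamma_0(N),\chi)$ is typically larger. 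Your Sturm-bound alternative does rescue the argument, but it replaces the proof by a finite numerical verification.

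The paper avoids any coefficient matching. Taking $[1,2,11]$ as the model, a mod-$8$ check first collapses the eight-term inclusion/exclusion to a single subtraction,
\[
t_{[1,2,11]}(n)=r_{(1,2,11)}(8n+14)-r_{(1,2,22)}(4n+7),
\]
since if not all of $x,y,z$ are odd then $x,z$ must both be even. Next, $(1,2,11)$ is genus two with second class $(2,3,4,0,0,2)$, and the key observation is that on the progression $8n+14$ this second form forces $y$ even, reducing it to $(1,2,6,0,0,2)$ at $4n+7$, which is \emph{genus one}. Hence $r_{(2,3,4,0,0,2)}(8n+14)$ equals the Eisenstein coefficient exactly, so the cusp form of the $(1,2,11)$-genus vanishes identically on $8n+14$, and $r_{(1,2,11)}(8n+14)=2\,a_{E_{(1,2,22)}}(4n+7)$ after a $2$-adic local-density comparison. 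Since $(1,2,22)$ is again two-class with mate $(1,6,8,0,0,4)$ and (with the correct Siegel weights) $\theta_{(1,2,22)}=E+g$, $\theta_{(1,6,8,0,0,4)}=E-g$, one obtains the exact identity
\[
t_{[1,2,11]}(n)=2a_E(4n+7)-\bigl(a_E(4n+7)+a_g(4n+7)\bigr)=r_{(1,6,8,0,0,4)}(4n+7).
\]
The other non-diagonal rows are handled the same way, each time locating an auxiliary form that happens to be genus one on the relevant progression. Thus the paper proves a closed-form equality of representation numbers with no Sturm check, whereas your route ultimately relies on one; both are valid, but the paper's argument explains structurally why $Q'$ is precisely the other class in the genus of the ``halved'' form.
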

\begin{proof}
Each of the above assertions follows the same simple argument, which we will demonstrate explicitly for the form $[1,2,11]$.  

First, note that if not all of $x,y,$ and $z$ are odd, then $x^2+2y^2+11z^2=8n+14$ has a solution modulo $8$ only if $x$ and $z$ are both even.  Therefore, if $t_b(n)$ is the number of solutions of the triangular form represented by $b$, and $r_Q(m)$ is the number of solutions to $Q(x)=m$, then
\begin{multline*}
t_{[1,2,11]}(n)=r_{(1,2,11,0,0,0)}(8n+14)-r_{(4,2,44,0,0,0)}(8n+14)\\
 = r_{(1,2,11,0,0,0)}(8n+14)-r_{(1,2,22,0,0,0)}(4n+7).
\end{multline*} 
Now we note that $(1,2,11,0,0,0)$ is a genus 2 quadratic form.  The other representative of the genus is $(2,3,4,0,0 2)$.  If $2x^2+3y^2+4z^2+2yz=8n+14$ has a solution, then it is clear that $y$ must be even.  Therefore, 
$$
r_{(2,3,4,0,0,2)}(8n+14)=r_{(2,12,4,0,0,4)}(8n+14)=r_{(1,2,6,0,0,2)}(4n+7).
$$
But $(1,2,6,0,0,2)$ is a genus 1 quadratic form, so it follows that $r_{(1,2,6,0,0,2)}(4n+7)$ is merely the value given by Siegel's local density formula as in \cite{Siegel2}.  Using this observation and the fact that the local densities are equal for $(1,2,6,0,0,2)$ and $(2,3,4,0,0,2)$, it follows that $r_{(1,2,6,0,0,2)}(4n+7)=r_{(2,3,4,0,0,2)}(8n+14)=r_{(1,2,11,0,0,0)}(8n+14)$.

The form $(1,2,22,0,0,0)$ is again genus $2$, and the other representative of the genus is $(1,6,8,0,0,4)$.  Therefore, the theta series for $Q=(1,2,22,0,0,0)$,
$$
\theta_{Q}(z):=\ssum{x}{}q^{Q(x)}
$$
satisfies $\theta_{Q}=E_Q+g$, where $E_Q$ is the Eisenstein series obtained by taking the local densities and $g$ is a cusp form which is a Hecke eigenform.  Siegel formula shows for $Q':=(1,6,8,0,0,4)$ that  
$$
E_Q=\frac{ \frac{\theta_{Q}}{4} + \frac{\theta_{Q'}}{2}}{\frac{3}{4}}.
$$
Therefore, $\theta_{Q'}=E_Q - g$.  By obversing that the local densities for $(1,2,11,0,0,0)$ and $(4,2,44,0,0,0)$ are the same other than at $p=2$, one can easily see by explicitly computing the local density at $p=2$ that $a_{E_{(1,2,11,0,0,0)}}(8n+14)=2 a_{E_{1,2,22,0,0,0}}(4n+7)$.  Therefore, we have shown that
\begin{multline*}
r_{(1,2,11,0,0,0)}(8n+14)-r_{(1,2,22,0,0,0)}(4n+7)= 2 a_{E_{1,2,22,0,0,0}}(4n+7) - (a_{E_{1,2,22,0,0,0}}(4n+7)+g)\\
 = a_{E_{1,2,22,0,0,0}}(4n+7)-g=r_{(1,6,8,0,0,4)}(4n+7).
\end{multline*}
This is precisely what we wanted to show.  The other cases follow analogously.
\end{proof}

We now proceed by determining which integers in these arithmetic progressions are represented by these quadratic forms.  Since each of these forms is genus 2, as well as spinor genus 2, we know that $\theta=E+g$, where $g$ is a Hecke eigenform in the complement of the space spanned by lifts of one dimensional theta-series.  Thus, we will employ the argument given in \cite{Kane4} in the case where there is precisely one Hecke eigenform.

We will begin by constructing an algorithm that given a non-zero Hecke eigenform $g$, a constant $c_E$ depending only on the local densities, such that the Eisenstein series has coefficients $a_E(N)=c_E h(mN)$ for some fixed integer $m$ and $N$ square free, where $h(D)$ is the class number, the modulus $q$ such that the corresponding twist from Waldspurger's theorem \cite{Waldspurger1} of the Shimura lift $G$ of $g$ is of modulus $qN^2$, the integer $m$ given above in the class number, an integer $D_0$ such that $a_g(D_0)$ is nonzero, $a_g(D_0)$, and the character $\chi$ that we are twisting by, and returning a bound $D$ beyond which the Eisenstein series is nonzero in the congruence class corresponding to $D_0$.

We do so by fixing $\x:=455$, $\sigma:=1.1573$, and $\sigma_2:=1.3465$ and calculating the bounds given in \cite{Kane4} for $L(\chi_{mN},\sigma+it)$ and $L(G,\chi,\sigma+it)$, where $G$ is the Shimura lift of $g$.  Bounding $\alpha$, $\beta$, $\gamma$, and $\delta$ (these are all independent of the particular choice of $g$) as in Lemma 7.1 of \cite{Kane4}, we see that 
$$
\begin{array}{ll}
\alpha(\x)\leq 0.089028567932572,& \beta(\x)\geq 0.0886630818642167,\\
\gamma(\x)\leq 0.249235264918139,&\delta(\x)\leq 0.0963544917482776.
\end{array}
$$
Given these bounds, the rest of the constants are easy to calculate (We use here the computer algebra system MAGMA \cite{BosmaCannon1}.), and further details may be found in \cite{Kane2}.  

Notice that the bound obtained by this algorithm is only valid in the congruence class congruent to $D_0$ modulo $8$ times the square of the determinant of the corresponding quadratic form, and that for each congruence class the choice of $m$ and $\chi$ may vary.  We therefore will run the algorithm for a choice of $D_0$ in each congruence class which satisfies the congruence modulo a 2 power given above, and we will merely state the largest such bound obtained in Table \ref{SufficientTable}.

\begin{table}
\caption{Sufficient Bounds under GRH}
\centering
\begin{tabsize}
\begin{tabular}{lcc}
\toprule
Triangular Form & Bound $D_0$ for Quadratic Form    & Bound $D_0'$ for odd squares\\
\midrule
$[1,2,6]$       & $1.23\times 10^9$                 &   $1.23\times 10^9$\\
$[1,2,8]$       & $6.0\times 10^8$                  &   $6.0\times 10^8$\\
$[1,2,9]$       & $1.68\times 10^6$                 &   $ 6.72\times 10^6$\\
$[1,2,11]$      & $8.0\times 10^4$                  &   $1.6\times 10^5$ \\
$[1,4,5]$       & $2.6\times 10^5$                  &   $2.6\times 10^5$ \\
$[1,4,8]$       & $5.7\times 10^8$                  &   $5.7\times 10^8$ \\
$[1,4,9]$       & $1.1\times 10^{12}$               &   $1.1\times 10^{12}$ \\
$[1,5,6]$       & $4.55\times 10^{9}$               &   $1.82\times 10^{10}$\\
\bottomrule
\end{tabular}
\end{tabsize}
\label{SufficientTable}
\end{table}

We then check up to the bound $D_0'$ for odd squares with a computer (using the fact that it is diagonal to our advantage by splitting off one dimension, as in \cite{Kane2}) and list the integers not represented by each triangular form.  We will list the triangular form, then the congruence classes not represented locally by the form, and finally the finite list of ``sporadic'' integers not represented globally but represented locally by the form.  Note that we have a leaf (and hence GRH seems unavoidable with current techniques) if and only if there are no congruence classes and no sporadic odd integers, namely the cases $[1,2,6]$, $[1,2,9]$, and $[1,4,5]$.

\begin{table}
\caption{Exceptional Integers Not Represented}
\centering
\begin{tabsize}
\begin{tabular}{lcc}
\toprule
Triangular Form & Congruence Classes    & Sporadic Integers\\
\midrule
$[1,2,6]$       &  $\emptyset$          & $\emptyset$    \\
$[1,2,8]$       &  $\emptyset$          & $\{4,19,112\}$      \\
$[1,2,9]$       &  $\emptyset$          & $\emptyset$ \\
$[1,2,11]$      &  $\emptyset$          & $\{4,25\}$\\
$[1,4,5]$       &  $\emptyset$          & $\{2,26,38\}$ \\
$[1,4,8]$       &  $\emptyset$          & $\{2,16,17\}$ \\
$[1,4,9]$       & $m\equiv 2\mod 9$ and $m\equiv 8\mod 9$  &  $\emptyset$ \\
$[1,5,6]$       & $\emptyset$    & $\{2,4,13,35\}$\\
\bottomrule
\end{tabular}
\end{tabsize}
\label{ExceptionalTable}
\end{table}

Given table \ref{ExceptionalTable}, it is easy to see that the only truants which arise from all of the subtrees other than $[1,4,9]$ are $13$, $17$, $19$, and $25$.  The first odd integer not represented by $[1,4,9]$ is $11$, so we only need to consider $[1,4,9,k]$ for $9\leq k\leq 11$.  The choice $k=9$ is stuck at $11$, and $k=10$ or $k=11$ clearly represent every integer other than $2$ and $8$, by taking $x_4=0$ or $x_4=1$.  

The case $[1,2,7,k]$ with $8\leq k\leq 11$ also follows from the above cases with no , and $k=7$ is stuck at $11$ so we get no new truants.  The case $[1,4,7]$ analogously gives no new truants, and $[1,3,3]$ and $[1,5,5]$ will follow similarly after we show the argument for $[1,3,5]$ and $[1,5,7]$.  Thus, it only remains to show the subtrees for $[1,3,5]$ and $[1,5,7]$.  

For $[1,3,5]$, we will follow a similar argument to above, but we must be slightly more careful.  We note that
$$
t_{[1,3,5]}(n) = r_{(1,3,5,0,0,0)}(8n+9) -  r_{(1,5,12,0,0,0)}(8n+9).
$$
The theta-series for $(1,3,5,0,0,0)$ decomposes as $E+g_1$, while the theta-series for $(1,5,12,0,0,0)$ decomposes as $\frac{1}{2}E+g_2$.  In this case, the Shimuar lift of $g_1-g_2$ is $\frac{1}{2}(-G_{15}+G_{15}|V(2)-4G_{15}|V(4))$, where $G_{15}$ is the (unique) newform of weight 2 and level 15 and $V(d)$ is the $d$-th V-operator (cf. \cite{Ono1}).  Therefore, since the Hecke operators commute with the Shimura lift, it follows that $g_1-g_2$ is a Hecke eigenform, so we may use the above algorithm with $g=g_1-g_2$.  

When $5$ does not divide $8n+9$, our algorithm gives the bound $D_0'=9.4\times 10^8$.  Checking up to this bound we find no sporadic integers outside of $n\equiv 2\mod 5$.  Therefore, all integers other than a subset of $n\equiv 2 \mod 5$ are represented by $[1,3,5]$.  However, since the truant of $[1,3,5]$ is $7$, we only need to consider $[1,3,5,k]$ for $5\leq k\leq 7$, and $5$ is stuck on $7$.  Hence, using $x_4=0$ or $x_4=1$ we represent every integer other than $2$.  The reason for separating the case of $n\equiv 2 \mod 5$ is because the bound obtained by the algorithm was not computationally feasible and was not needed to obtain the desired result.  

In the case of $[1,5,7]$, 
$$
t_{[1,5,7]}(n) = r_{(1,5,7,0,0,0)}(8n+13)-r_{(1,5,28,0,0,0)}(8n+13),
$$
and both $(1,5,7,0,0,0)$ and $(1,5,28,0,0,0)$ are genus $3$ and spinor genus $3$.  Therefore, $(1,5,7,0,0,0)$ decomposes into
$$
E+g_1+g_2
$$
and $(1,5,28,0,0,0)$ decomposes into
$$
\frac{1}{2}E+g_3+g_4,
$$
where $g_1$, $g_2$, $g_3$, and $g_4$ are Hecke eigenforms in the complement of the space spanned by lifts of one dimensional theta-series.  Moreover, computation shows that the Shimura lift of $g_1-g_3$ is $\frac{1+\sqrt{17}}{2\sqrt{17}} (G_{35} + c_1 G_{35}|V(2)+c_2 G_{35}|V(4))$, where $G_{35}$ is the newform of weight 2 and level $35$ whose second coefficient is $\frac{-1+\sqrt{17}}{2}$.  The constants $c_1$ and $c_2$ are irrelevant, since we will twist the Hecke eigenform by $\chi_4$, killing all of the coefficients divisible by $2$, and the coefficients of $G|V(d)$ only has coefficients divisible by $d$.  Moreover, the Shimura lift of $g_2-g_4$ is $(1-\frac{1+\sqrt{17}}{2\sqrt{17}})(\sigma{G_{35}} + \sigma(c_1) \sigma{G_{35}}|V(2)+\sigma(c_2) \sigma{G_{35}}|V(4))$.  Here $\sigma$ is the Galois map sending $\sqrt{17}$ to $-\sqrt{17}$.

We could write a separate algorithm from the one above for two (or any arbitrary number of) eigenforms, but for simplicity we will simply rewrite the above sum as
$$
(\alpha E+(g_1-g_3))+ \left(\left(\frac{1}{2}-\alpha\right)E +(g_2-g_4)\right)
$$
for some appropriate choice of $\alpha$, and then bound each half separately as above, taking the maximum of the two bounds, since beyond the corresponding maximum bound $D_0$, $a_{\alpha E+(g_1-g_3)}>0$ and $a_{\left(\frac{1}{2}-\alpha\right)E +(g_2-g_4)}>0$.  The optimal choice of $\alpha$ would give equal $D_0$ bounds for each.  

We choose $\alpha=\frac{17}{36}$ here, so that $\frac{1}{2}-\alpha=\frac{1}{36}$.  Doing so, whenever $5$ does not divide $8n+13$, we get a bound for the part corresponding to $g_1-g_3$ of $D_0'=4.53\times 10^8$ and a bound of $\tilde{D_0}'=1.65\times 10^8$ for the part corresponding to $g_2-g_4$.  The only sporadic integer not congruent to $4$ mod $5$ is $2$.  Therefore, we are again done, since $[1,5,7,k]$ represents every odd when $8\leq k\leq 9$, as $n\equiv 4\mod 5$ must be represented with $x_4=1$, and $k=7$ is stuck at $9$.

\end{proof}

\begin{remark}
One may naturally ask here whether other currently known techniques are sufficient to determine the set $S_0$ for $S$ the set of all odd integers.  This question, it turns out, is more easily asked than answered.  We will give a partial answer to this question, however.  For a node of depth $3$, determining whether the node is a leaf when the node is not of genus 1 or spinor genus 1 does not seem possible given the current techniques.  Indeed, if such an algorithm were known, it would most likely also give a solution for integers represented by Ramanujan's ternary quadratic form, and hence give an alternate proof of Ono and Soundararajan's result without GRH.  

There appear to be 3 leafs of depth $3$ which are not (spinor) genus 1 when $S$ is the set of all odd integers, as no counterexamples have been found up to the bound which is predicted by GRH.  Therefore, if current techniques were sufficient to determine $S_0$, then these ``leaves'' must indeed not truly be leaves.  But then there must exist (large) counterexamples, and our set $S_0$ must be incorrect.  For this reason, it is not this author's belief that current techniques will solve this problem without GRH unless a new breakthrough occurs in the area.  Moreover, since all of the other counterexamples for the other nodes (of greater depth) provably occur at very small integers, it seems reasonable to conjecture that $S_0$ is as given.
\end{remark}

We conclude with the proof of Theorem \ref{curiousthm}.
\begin{proof}
The set $S$ is precisely the integers represented by $[2,3,4]$.  The integers less than or equal to 89 not represented by $[2,3,4]$ are $1,8,31$.  Since the first truant of $S$ is $2$, we begin with $[1]$ or $[2]$.  

The truant of $[1]$ is $2$, so we obtain $[1,1]$ and $[1,2]$.  For $[1,2]$ the next truant is $4$, and the remaining subtree has no further truants by Liouville's theorem.  For $[1,1]$ the truant is $5$.  Each case other than $[1,1,3]$ is then covered by Liouville's theorem, while the case $[1,1,3]$ has the same subtree as in Theorem \ref{OddThm}, since in each case the truants were the smallest integer other than 8 not represented by the form and $31$ is not one of the truants.  From this we obtain the truants $17$ and $89$.  This concludes the $[1]$ subtree.

For the $[2]$ subtree the first truant is $3$ so we escalate to $[2,2]$ or $[2,3]$.  The case $[2,2]$ still has truant $3$, so we escalate to $[2,2,2,3]$ and $[2,2,3]$.  Clearly $[2,2,2,3]$ represents every natural number greater than $1$ because $[2,2,2]$ represents every even natural number.  The form $(2,2,3)$ is genus 1 so $[2,2,3]$ represents every natural number $n$ such that $8n+7$ is not of the form $3^{2r+1}(3m+2)$.  The first truant is $10$ and if $3\nmid k$, we are clearly done for $[2,2,3,k]$, so we only have $k=3,6,9$ and we are stuck at $k=9$.  For $k=3$ we have the truant $19$ and for $k=6$ we have the truant $19$.  Analogous to the case $[1,1,3]$ we are able to show that no other truants occur in this subtree.  The truant of $[2,3]$ is $4$, so we have $[2,3,3]$ or $[2,3,4]$.  We are stuck at $k=3$ and $[2,3,4]$ represents $S$ by definition.
\end{proof}

\section*{Acknowledgements}
The author would like to thank W. Bosma for guidance and helpful conversation.

\end{document}